\definecolor{darkgreen}{rgb}{0.5,0.25,0}
\definecolor{darkblue}{rgb}{0,0,1}
\definecolor{answerblue}{rgb}{0,0,0.75}
\theoremstyle{plain}
\newtheorem{theorem}{Theorem}[section]	
\newtheorem{lemma}[theorem]{Lemma}
\newtheorem{corollary}[theorem]{Corollary}
\theoremstyle{definition}
\newtheorem{definition}[theorem]{Definition}
\newtheorem{example}[theorem]{Example}
\theoremstyle{remark}
\newtheorem{remark}[theorem]{Remark}
\numberwithin{equation}{section}	
\def\C{\mathbb{C}}
\def\R{\mathbb{R}}    
\def\N{\mathbb{N}}
\def\Cr{\mathbb{C}^{r}}
\let\mib=\boldsymbol
\def\mnu{{\mib \nu}}
\def\mp{\mathbf{p}}
\def\mq{\mathbf{q}}
\def\vf{\mathsf{f}}
\def\vu{\mathsf{u}}
\def\vv{\mathsf{v}}
\def\ve{\mathsf{e}}
\def\mA{\mathbf{A}}
\def\mB{\mathbf{B}}
\def\mR{\mathbf{R}}
\def\mT{\mathbf{T}}
\def\lD{\mathcal{D}}
\def\lH{\mathcal{H}}
\def\lV{\mathcal{V}}
\def\lW{\mathcal{W}}
\def\lL{\mathcal{L}}
\def\phi{\mathsf{\varphi}}
\def\dom{\operatorname{dom}}
\def\dim{\operatorname{dim}}
\def\ker{\operatorname{ker}}
\def\ran{\operatorname{ran}}
\def\dup#1#2#3#4{{}_{#1\!}\langle\, #2 , #3 \,\rangle_{#4}} 
\def\scp#1#2{\langle\, #1 \mid #2 \,\rangle}  
\def\iscp#1#2{[\, #1 \mid #2 \,]}  
\begin{document}

\title[$m-$accretive extensions of Friedrichs operators]{$m-$accretive extensions of Friedrichs operators}

\author[Burazin]{K.~Burazin}\address{Kre\v simir Burazin, 
    School of Applied Mathematics and Computer Science, 
    J.~J.~Strossmayer University of Osijek, 
    Trg Ljudevita Gaja 6, 31000 Osijek, Croatia}\email{kburazin@mathos.hr}

\author[Erceg]{M.~Erceg}\address{Marko Erceg,
	Department of Mathematics, Faculty of Science, University of Zagreb, Bijeni\v{c}ka cesta 30,
	10000 Zagreb, Croatia}\email{maerceg@math.hr}

\author[Soni]{S.~K.~Soni}\address{Sandeep Kumar Soni,
	Department of Mathematics, Faculty of Science, University of Zagreb, Bijeni\v{c}ka cesta 30,
	10000 Zagreb, Croatia}\email{sandeep@math.hr}
\curraddr{Institut f\" ur Angewandte Mathematik, TU Graz, Steyrergasse 30/III, 8010 Graz, Austria}
\email{soni@tugraz.at}

\subjclass{35F45, 46C05, 46C20, 47B44}


\keywords{
symmetric positive first-order system of partial differential equations,
dual pairs, 
indefinite inner product space, 
$m-$accretive realisations}

\begin{abstract}
The introduction of abstract Friedrichs operators in 2007—an operator-theoretic framework for studying classical Friedrichs operators—has led to significant developments in the field, including results on well-posedness, multiplicity, and classification. More recently, the von Neumann extension theory has been explored in this context, along with connections between abstract Friedrichs operators and skew-symmetric operators.

In this work, we show that all $m-$accretive extensions of abstract Friedrichs operators correspond precisely to those satisfying (V)-boundary conditions. We also establish a connection between the $m-$accretive extensions of abstract Friedrichs operators and their skew-symmetric components. Additionally, three equivalent formulations of boundary conditions are unified within a single interpretive framework. To conclude, we discuss a constructive relation between (V)- and (M)-boundary conditions and examine the multiplicity of the associated $M$-operators. We demonstrate our results by two examples, namely, the first order ordinary differential equation on an interval, with various boundary conditions, and the second-order elliptic partial differential equation with Dirichlet boundary conditions.

\end{abstract}

\maketitle


\section{Introduction}\label{intro}

Ern, Guermond and Caplain \cite{EGC} introduced the concept of \emph{Abstract Friedrichs operators} in 2007, in order to provide a Hilbert space operator-theoretic approach of studying the \emph{classical Friedrichs operators} which were introduced by Friedrichs \cite{KOF} in 1958. Friedrichs' primary motivation was to treat the equations that change their type, like the Tricomi equation, which appear in the transonic fluid flow. The class of classical Friedrichs operators encompasses a wide range of (semi)linear equations of mathematical physics (regardless of their order), including classical elliptic, parabolic and hyperbolic equations, making it still attractive to the community. A nice historical exposition of the classical Friedrichs' theory (which was very active until 1970's) can be found in \cite{MJensen}. While we postpone the introduction of the precise definition of abstract Friedrichs operators to the subsequent section (Definition \ref{def:abstractFO}), here we discuss the main ideas. 
Assume we are given two densely defined linear operators $T_0$, $\widetilde{T}_0$ on a Hilbert space $\lH$ such that $T_0\subseteq \widetilde{T}_0^*$ and $\widetilde{T}_0\subseteq T_0^*$ ($T_0^*$ denotes the adjoint operator of $T_0$ and $\widetilde{T}_0\subseteq T_0^*$ is understood in the standard way: $\dom \widetilde{T}_0\subseteq\dom T_0^*$ and $T_0^*|_{\dom\widetilde{T}_0}=\widetilde{T}_0$). Now we seek for realisations (or extensions) $T$ of $T_0$, i.e.~$T_0\subseteq T\subseteq \widetilde{T}_0^*$, such that $T:\dom T\to\lH$ is bijective. Thus, if $T$ represents a differential operator (e.g.~a classical Friedrichs operator), the associated problem $Tu=f$, for $f\in\lH$, is well-posed, and the choice of realisation $T$  corresponds to the prescribed (initial-)boundary conditions. Since our focus is on further developing the abstract theory, we begin with a brief overview of the existing framework.

\begin{itemize}
    \item The authors introduced the concept of abstract Friedrichs operators in \cite{EGC} and obtained a well-posedness result within the operator-theoretic framework introduced for this purpose (see also \cite{ABcpde}). This is an immediate improvement over the classical theory, because no such satisfactory result was previously available.

    \item The renewed interest in Friedrichs systems arose from numerical analysis (see e.g.~\cite{HMSW, MJensen}) based on the need to apply (discontinuous) Galerkin finite element methods to partial differential equations of various types. The abstract approach initiated a number of new investigations in various directions. For example, studies of different representations of boundary conditions and the relation with classical theory \cite{ABcpde, ABjde, ABVisrn, AEM-2017, BH21, ES22}, applications to diverse (initial-)boun\-dary value problems of elliptic, hyperbolic, and parabolic type \cite{ABVjmaa, BEmjom, BVcpaa, EM19, EGsemel, MDS}, and the development of different numerical schemes \cite{BDG, BEF, CM21, CHWY23, EGbis, EGter}.

    \item In the classical setting, there were three different but equivalent ways to pose boundary conditions. One was Friedrichs' boundary condition formulated via matrix valued boundary fields \cite{KOF}, usually referred to as (FM)-boundary conditions. The second was the Friedrichs-Lax (FX)-boundary conditions, also called maximal boundary conditions \cite{FL}. The third approach, introduced in \cite{PS}, is referred to as the (FV)-boundary condition. In the abstract theory, there are analogous boundary conditions (M), (X) and (V) respectively. The (V)-boundary condition is also referred to as the cone-formalism, which was introduced in \cite{EGC}. A discussion of equivalence of these three types of boundary conditions can be found in \cite{EGC, ABcpde}. We shall recall the definitions in the next section.

    \item In \cite{AEM-2017} authors prove the existence and multiplicity of subspaces satisfying (V)-boundary conditions. It was also established that Grubb's universal extension theory \cite{Grubb} is applicable, leading to a classification result.

    \item A von-Neumann type decomposition of the graph space of abstract Friedrichs operators was proved in \cite{ES22}. In \cite{ES25}, it was recognised that abstract Friedrichs operators can be written as the sum of skew-symmetric and a bounded, strictly positive definite self-adjoint operator. In the same paper the von-Neumann extension theory for abstract Friedrichs operators was studied.
    
\end{itemize}

The first main result of this paper is to prove the equivalence between the $m-$accretive extensions and the (V)-boundary condition of abstract Friedrichs operators.
To place this result in context, 
let us briefly recall that the notions of accretive and $m-$accretive operators stem from the study of linear operators in Hilbert spaces and their applications in differential equations and stability analysis. 
Early work in functional analysis by researchers such as E. Hille and R. S. Phillips in the 1950s, who developed the theory of semigroups of linear operators, laid the groundwork for understanding accretive operators as key tools in the analysis of time-evolution problems. Over time, mathematicians such as H. Br\'ezis and G. Minty expanded these concepts to nonlinear operators, introducing the notion of maximal accretivity to ensure well-posedness in various contexts.

Returning to abstract Friedrichs operators, in \cite{BEmjom}, the authors proved that any realisation satisfying the (V)-boundary condition gives rise to a contractive $C_0-$semigroup. A straightforward application of the Lumer-Phillips theorem then shows that any such realisation is $m-$accretive. The main novelty of our result lies in the converse: namely, that any $m-$accretive realisation of abstract Friedrichs operators is precisely the realisation satisfying the (V)-boundary condition. In addition, we also prove that the skew-symmetric part of abstract Friedrichs operators has $m-$accretive extensions on the same domains. The classification of $m-$accretive extensions of skew-symmetric operators is well-studied (see \cite{ACE23,PT24, Trostorff23, WW20}). Thus, we can also classify all $m-$accretive realisations (or, equivalently, realisations satisfying the (V)-boundary condition) of abstract Friedrichs operators by applying the existing theory for skew-symmetric operators.

In the next section, we begin by fixing all definitions and notations. 
The result discussed above regarding the connection between the 
(V)-boundary condition and $m-$accretive realisations is presented in
Section \ref{sec:Overview}. 
The fourth section offers an interpretation of the (M), (X) and (V) boundary conditions in terms of $m-$accretive realisations. 
Finally, the paper concludes with some improved results on the classification of 
(M)-boundary conditions in Section \ref{sec:M=V}, illustrated with a second 
order PDE example.

\section{Abstract Friedrichs operators}\label{sec:abstractFO}

\subsection{Definition and main properties}

The abstract Hilbert space formalism for Friedrichs systems which we 
study in this paper was introduced 
and developed in \cite{EGC, ABcpde} for real vector spaces, while 
the required differences for complex vector spaces have been 
supplemented more recently in \cite{ABCE}. 
Here we present the definition in the form given in
\cite[Definition 1]{AEM-2017}.

\begin{definition}\label{def:abstractFO}
A (densely defined) linear operator $T_0$ on a complex Hilbert space $\lH$
(a scalar product is denoted by
$\scp\cdot\cdot$, which we take to be anti-linear in the second entry)
is called an \emph{abstract Friedrichs operator} if there exists a
(densely defined) linear operator $\widetilde{T}_0$ on $\lH$ with the following properties:
\begin{itemize}
 \item[(T1)] $T_0$ and $\widetilde{T}_0$ have a common domain $\lD$, i.e.~$\dom T_0=\dom\widetilde{T}_0=\lD$,
 which is dense in $\lH$, satisfying
 \[
 \scp{T_0\phi}\psi \;=\; \scp\phi{\widetilde T_0\psi} \;, \qquad \phi,\psi\in\mathcal{D} \,;
 \]
 \item[(T2)] there is a constant $\lambda>0$ for which
 \[
 \|(T_0+\widetilde{T}_0)\phi\| \;\leqslant\; 2\lambda\|\phi\| \;, \qquad \phi\in\mathcal{D} \,;
 \]
 \item[(T3)] there exists a constant $\mu>0$ such that
 \[
 \scp{(T_0+\widetilde{T}_0)\phi}\phi \;\geqslant\; 2\mu \|\phi\|^2 \;, \qquad \phi\in\mathcal{D} \,.
 \]
\end{itemize}
The pair $(T_0,\widetilde{T}_0)$ is referred to as a \emph{joint pair of abstract Friedrichs operators}
(the definition is indeed symmetric in $T_0$ and $\widetilde{T}_0$).
\end{definition}

\begin{remark}
Any pair of operators satisfying condition (T1), and consequently any pair of abstract Friedrichs operators, can be viewed as a particular case of \emph{adjoint}, \emph{dual} or \emph{symmetric} pairs. In the general framework, however, it is not required that those two operators possess identical domains (see e.g.~\cite{Beh25, JP17} and \cite[Chapter 13]{Grubb}).    
\end{remark}

Before moving to the main topic of the paper, 
let us briefly recall the essential properties 
of (joint pairs of) abstract Friedrichs operators, 
which we summarise in the form of a theorem.
At the same time, we introduce the notation that is used throughout the paper.
The presentation consists of two steps: first we deal with the consequences 
of conditions (T1)--(T2), and then we highlight the additional structure implied by condition (T3). 
A similar approach can be found in \cite[Theorem 2.2]{BEW23}.
\begin{theorem}\label{thm:abstractFO-prop}
Let a pair of linear operators $(T_0,\widetilde{T}_0)$ on $\lH$ satisfy {\rm (T1)} and {\rm (T2)}. Then the following holds.
\begin{enumerate}
\item[\rm{(i)}] $T_0\subseteq \widetilde{T}_0^*=:T_1$ and $\widetilde{T}_0\subseteq T_0^*=:\widetilde{T}_1$, where 
$\widetilde{T}_0^*$ and $T_0^*$ are adjoints of $\widetilde{T}_0$ and $T_0$, respectively.

\item[\rm{(ii)}] The pair of closures $(\overline{T}_0,\overline{\widetilde{T}}_0)$ satisfies {\rm (T1)--(T2)} with the same constant $\lambda$.

\item[\rm{(iii)}] $\dom \overline{T}_0=\dom\overline{\widetilde{T}}_0=:\lW_0$ and $\dom T_1=\dom\widetilde{T}_1=:\lW$.

\item[\rm{(iv)}] The graph norms $\|\cdot\|_{T_1}:=\|\cdot\|+\|T_1\cdot\|$ and $\|\cdot\|_{\widetilde T_1}
	:=\|\cdot\|+\|\widetilde T_1\cdot\|$ are equivalent, $(\lW,\|\,\cdot\,\|_{T_1})$ is a Hilbert space (the \emph{graph space})
	and $\lW_0$ is a closed subspace in it containing $\lD$.
	
\item[\rm{(v)}] The linear operator $\overline{T_0+\widetilde{T}_0}$ is everywhere defined, bounded and self-adjoint on 
$\lH$ that coincides on $\lW$ with $T_1+\widetilde{T}_1$.

\item[\rm{(vi)}] The expression 
\begin{equation}\label{eq:D1}
\dup{\lW'}{Du}{v}{\lW} \;:=\; \scp{T_1u}{v} 
- \scp{u}{\widetilde{T}_1v} \;,
\quad u,v\in\lW \,, 
\end{equation}
defines a bounded linear operator $D\in \mathcal{L}(\lW;\lW')$ that is called the \emph{boundary operator}, as $\ker D = \lW_0$.
The \emph{boundary form}
\begin{equation}\label{eq:D}
\iscp uv :=\dup{\lW'}{Du}{v}{\lW}  \;,
\quad u,v\in\lW \,, 
\end{equation}
defines an indefinite inner product on $\lW$ (cf.~\cite{Bo}) and we have $\lW^{[\perp]}=\lW_0$ and $\lW_0^{[\perp]}=\lW$, where
the $\iscp\cdot\cdot$-orthogonal complement of a set $X\subseteq \lW$
is defined by
\begin{equation*} 
X^{[\perp]} := \bigl\{u\in \lW : (\forall v\in X) \quad 
\iscp uv = 0\bigr\}
\end{equation*}
and it is closed in $\lW$. Moreover, $X^{[\perp][\perp]}=X$ if and only if 
$X$ is closed in $\lW$ and $\lW_0\subseteq X$. 

For future reference, let us define
\begin{equation}\label{eq:lWposneg}
\begin{aligned}
	\lW^+ &:= \bigl\{u\in\lW : \iscp{u}{u}\geq 0\bigr\} \\
	\lW^- &:= \bigl\{u\in\lW : \iscp{u}{u}\leq 0\bigr\} \,.
\end{aligned}
\end{equation}
Note that $X\subseteq X^{[\perp]}$ implies $X\subseteq \lW^+\cap\lW^-$.
\end{enumerate}

Assume, in addition, {\rm (T3)}, i.e.~$(T_0,\widetilde{T}_0)$ is 
a joint pair of abstract Friedrichs operators. Then
\begin{itemize}
\item[\rm{(vii)}] $(\overline{T}_0,\overline{\widetilde{T}}_0)$ satisfies {\rm (T3)} with the same constant $\mu$.
\item[\rm{(viii)}] A lower bound for $\overline{T_0+\widetilde{T}_0}$ is $2\mu>0$.
\item[\rm{(ix)}] We have
\begin{equation}\label{eq:decomposition}
\lW \;=\; \lW_0 \dotplus \ker T_1 \dotplus \ker\widetilde T_1 \;,
\end{equation}
where the sums are direct, $\lW_0 \dotplus \ker T_1 \subseteq \lW^-$, $\lW_0  \dotplus \ker\widetilde T_1  \subseteq \lW^+$ and all spaces on the right-hand side 
are pairwise $\iscp{\cdot}{\cdot}$-orthogonal. 
Moreover, the linear projections
\begin{equation}\label{eq:projections}
p_\mathrm{k} : \lW \to \ker T_1 \quad \hbox{and} \quad
p_\mathrm{\tilde k}:\lW\to \ker \widetilde{T}_1
\end{equation}
are continuous as maps $(\lW,\|\cdot\|_{T_1})\to (\lH,\|\cdot\|)$, i.e.~$p_\mathrm{k}, p_\mathrm{\tilde k}\in\lL(\lW,\lH)$.
\item[\rm{(x)}] Let $\lV$ be a subspace of the graph space $\lW$ such that
$\lW_0\subseteq\lV\subseteq \lW^+$ (see \eqref{eq:lWposneg}). Then 
$$
(\forall u\in \lV) \qquad \|T_1u\|\geq \mu\|u\| \,.
$$ 
In particular, $\overline{\ran (T_1|_\lV)}=\ran \overline{T_1|_\lV}$.

Analogously, if $\widetilde{\lV}$ is a subspace of $\lW$ such that 
$\lW_0\subseteq\widetilde{\lV}\subseteq\lW^-$, then 
$\|\widetilde T_1 v\|\geq \mu\|v\|$, $v\in\widetilde{\lV}$, 
and $\overline{\ran (\widetilde{T}_1|_{\widetilde{\lV}})}=
\ran \overline{\widetilde{T}_1|_{\widetilde \lV}}$.

\item[\rm{(xi)}] Let $\lV\subseteq\lW$ be a closed subspace (in $\lW$) containing $\lW_0$. 
Then, for a subspace $\widetilde{\lV}$ of $\lW$, the operators $T_1|_\lV$ and $\widetilde{T}_1|_{\widetilde{\lV}}$ are 
mutually adjoint, i.e.~$(T_1|_\lV)^*=\widetilde{T}_1|_{\widetilde{\lV}}$
and $(\widetilde{T}_1|_{\widetilde{\lV}})^*=T_1|_\lV$, if and only if
$\widetilde{\lV}=\lV^{[\perp]}$.
\item[\rm{(xii)}]  Let $\lV\subseteq\lW$ be a closed subspace containing $\lW_0$
such that $\lV\subseteq \lW^+$ and $\lV^{[\perp]}\subseteq\lW^-$.
Then $T_1|_\lV:\lV\to\lH$ and $\widetilde{T}_1|_{{\lV}^{[\perp]}}:\lV^{[\perp]}\to\lH$ are bijective,
i.e.~isomorphisms when we equip their domains with the 
graph topology, and for every $u\in\lV$ the following estimate holds:
\begin{equation}\label{eq:apriori}
	\|u\|_{T_1} \leq \Bigl(1+\frac{1}{\mu}\Bigr) \|T_1 u\| \,.
\end{equation}
The same estimate holds for $\widetilde{T}_1$ and ${\lV}^{[\perp]}$ replacing $T_1$ and $\lV$, respectively.

These bijective realisations of $T_0$ and $\widetilde{T}_0$ we call 
\emph{bijective realisations with signed boundary map}. 
\item[\rm{(xiii)}] Let $\lV\subseteq\lW$ be a closed subspace containing $\lW_0$.
Then $T_1|_\lV:\lV\to\lH$ is bijective if and only if $\lV\dotplus\ker T_1 = \lW$.
\end{itemize}
\end{theorem}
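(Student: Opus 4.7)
My plan is to handle the two implications separately; both reduce to short algebraic arguments once I have the auxiliary fact that $T_1\colon\lW\to\lH$ is surjective, which I will obtain by applying (xii) to a suitable $\lV_0$.

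For the forward direction, assume $T_1|_\lV$ is bijective. Injectivity gives $\lV\cap\ker T_1=\{0\}$, so the sum $\lV+\ker T_1$ is automatically direct. Given any $w\in\lW$, surjectivity yields $v\in\lV$ with $T_1v=T_1w$, whence $w-v\in\ker T_1$ and $w=v+(w-v)\in\lV\dotplus\ker T_1$, proving equality. For the reverse direction, assume $\lW=\lV\dotplus\ker T_1$. Directness is exactly $\lV\cap\ker T_1=\{0\}$, i.e.~injectivity of $T_1|_\lV$. For surjectivity, once $\ran T_1=\lH$ is known, I take $f\in\lH$, pick $w\in\lW$ with $T_1w=f$, split $w=v+k$ with $v\in\lV$, $k\in\ker T_1$, and deduce $T_1v=f$.

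The missing ingredient is therefore $\ran T_1=\lH$. I would apply (xii) to $\lV_0:=\lW_0\dotplus\ker\widetilde{T}_1$: the containment $\lV_0\subseteq\lW^+$ is part of (ix), and for $\lV_0^{[\perp]}\subseteq\lW^-$ I combine the pairwise $\iscp{\cdot}{\cdot}$-orthogonality of the three summands in (ix) with the identity $\iscp{u}{u}=\scp{T_1u}{u}$ on $\ker\widetilde{T}_1$, which by (T3)/(viii) is strictly positive for $u\neq 0$; this forces the $\ker\widetilde{T}_1$-component of any element of $\lV_0^{[\perp]}$ to vanish, giving $\lV_0^{[\perp]}=\lW_0\dotplus\ker T_1\subseteq\lW^-$. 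Then (xii) applies, $T_1|_{\lV_0}$ is bijective onto $\lH$, and in particular $\ran T_1=\lH$. The only mildly non-routine step is this computation of $\lV_0^{[\perp]}$; alternatively one may cite the existence of bijective realisations with signed boundary map from \cite{AEM-2017, EGC} to obtain $\ran T_1=\lH$ directly.
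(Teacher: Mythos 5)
Your proposal addresses only part (xiii) of the theorem; parts (i)--(xii) are not touched. The paper itself does not reprove any part of this theorem either --- it is a compendium of known results, with (xiii) attributed to \cite[Lemma 3.10]{ES22} --- so I will assess your argument for (xiii) on its own terms and against that source.

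For part (xiii) your argument is correct and is essentially the standard one. The forward implication (bijectivity of $T_1|_\lV$ implies $\lV\dotplus\ker T_1=\lW$) and the reverse implication modulo surjectivity of $T_1$ are exactly the short algebraic manipulations used in \cite{ES22}. The one place where you depart from the cited proof is in how you obtain $\ran T_1=\lH$: rather than invoking the abstract existence of a bijective realisation with signed boundary map (as guaranteed by \cite{AEM-2017, EGC}, the alternative you yourself mention), you exhibit the concrete subspace $\lV_0=\lW_0\dotplus\ker\widetilde T_1$ and verify the hypotheses of (xii) by hand. Your computation of $\lV_0^{[\perp]}$ is sound: by the pairwise $\iscp{\cdot}{\cdot}$-orthogonality in (ix), an element $u_0+\nu+\tilde\nu\in\lV_0^{[\perp]}$ satisfies $0=\iscp{u}{\tilde\nu}=\iscp{\tilde\nu}{\tilde\nu}=\scp{(T_1+\widetilde T_1)\tilde\nu}{\tilde\nu}\geq 2\mu\|\tilde\nu\|^2$, forcing $\tilde\nu=0$, so $\lV_0^{[\perp]}=\lW_0\dotplus\ker T_1\subseteq\lW^-$. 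This makes the surjectivity step self-contained, which is a small gain over a bare citation. The only hypothesis of (xii) you do not verify is that $\lV_0$ is \emph{closed} in $\lW$; this does not follow merely from each summand being closed, but it does follow from (ix), since the projection $p_{\mathrm k}$ onto $\ker T_1$ along $\lV_0$ is continuous from $(\lW,\|\cdot\|_{T_1})$ to $(\lH,\|\cdot\|)$ and the graph norm restricted to $\ker T_1$ coincides with the $\lH$-norm, so $p_{\mathrm k}\in\lL(\lW,\lW)$ and $\lV_0=\ker p_{\mathrm k}$ is closed. You should add that one line; with it, your proof of (xiii) is complete.
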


The statements i)--iv), vii) and viii) follow easily from the corresponding
assumptions (cf.~\cite{AEM-2017, EGC}).
The claims v), x) and xii) are already 
argued in the first paper on abstract Friedrichs operators \cite{EGC} for real vector spaces
(see sections 2 and 3 there), while in \cite{ABCE} 
the arguments are repeated in the complex setting.
The same applies for vi) with a remark that for a further structure 
of indefinite inner product space $(\lW, \iscp{\cdot}{\cdot})$ we refer to 
\cite{ABcpde}. 
The decomposition given in ix) is derived in \cite[Theorem 3.1]{ES22},
while for additional claims on projectors we refer to the proof of Lemma 3.5 in the 
aforementioned reference. In the same reference one can find the proof 
of part xiii) (Lemma 3.10 there).
Finally, a characterisation of mutual self-adjointness, xi), is obtained 
in \cite[Theorem 9]{AEM-2017}.

\begin{remark}
In this paper, we use the term \emph{realisation} to denote 
operators $T$ that lie between the minimal (e.g.~$T_0$) and the maximal
(e.g.~$T_1$) operators, i.e.~$T_0\subseteq T\subseteq T_1$. 
Moreover, in our context, it coincides with the concept of \emph{extensions}
of the minimal operator, since all studied operators in this manuscript 
are always between a minimal and a maximal operator
(which is not the case in all works, cf.~\cite[Proposition 3.3]{PT24}). Note that all realisations are densely defined.
\end{remark}

\subsection{Boundary conditions for Friedrichs operators}
We recall the definitions and connections among different boundary conditions for abstract Friedrichs operators, which are mainly followed from the references \cite{ABcpde, EGC}, while a brief overview can also be found in \cite[Chapter 2.5]{Soni24}. We assume that $(T_0,\widetilde T_0)$ is a joint pair of abstract Friedrichs operators on a Hilbert space $\lH$.

\begin{definition}[(V)-boundary conditions]\label{dfn:V1-V2}
	 A subspace $\lV$ of the graph space $\lW$ is said to satisfy (V)-\emph{boundary conditions} (jointly with $\widetilde \lV := \lV^{[\perp]}$) if the following conditions are satisfied:
	\begin{itemize}
		\item[(V1)] The boundary form has opposite signs on these spaces. More precisely, $\lV\subseteq \lW^+$ and $\widetilde\lV\subseteq \lW^-$, i.e.
		\begin{align*}
		& (\forall u\in \lV)\qquad \iscp{u}{u}\;\geq\; 0\,,\\
		& (\forall v\in \widetilde \lV) \qquad \iscp{v}{v}\;\leq\; 0\;.
		\end{align*}
		
		\item[(V2)] The subspaces $\lV, \widetilde \lV$ are mutually $\iscp{\cdot}{\cdot}$-orthogonal, i.e.
		\begin{align*}
		\lV \;=\; \widetilde \lV^{[\perp]} \quad \mathrm{and} \quad \widetilde \lV \;=\; \lV^{[\perp]}\;.
		\end{align*}
	\end{itemize}
\end{definition}
Note that the second equality in (V2) is just the definition of $\widetilde{\lV}$, and it is repeated here to emphasise symmetry in conditions for both subspaces..
\begin{remark}\label{rem:Vcond}
	Let us note that any subspace $\lV$ of $\lW$ that satisfies assumption {\rm (V)} (in pair with $\widetilde \lV := \lV^{[\perp]}$) is, by Theorem \ref{thm:abstractFO-prop}.vi),  closed and contains $\lW_0$, and thus satisfies assumptions from part xii) of the same theorem. Since the converse is also true (see again Theorem \ref{thm:abstractFO-prop}.vi)) it follows that (V)-boundary conditions correspond to realisations with signed boundary map.
	If this is the case, note also that by part xiii) of the aforementioned theorem we have $\lV\dotplus\ker T_1 = \lW$. 
    However, not all bijections described in that part are with signed boundary map (see Example \ref{ex:ex-semigroup} below). 
\end{remark}

\begin{definition}[(X)-boundary conditions]
	A subspace $\lV$ of $(\lW,\iscp{\cdot}{\cdot})$ is said to satisfy (X)-\emph{boundary conditions} if it is maximal non-negative, i.e.~if the following conditions hold:
	\begin{itemize}
		\item[(X1)] $\lV$ is non-negative with respect to the boundary form $\iscp{\cdot}{\cdot}$, i.e.~$\lV\subseteq \lW^+$.
		
		\item[(X2)] There is no non-negative subspace of $(\lW,\iscp{\cdot}{\cdot})$ containing $\lV$ properly.
	\end{itemize}
	
\end{definition}
In a similar way one can define the notion of maximal non-positive subspace of $\lW$. Actually, it can be proven that a subspace $\lV$ of $\lW$ is maximal non-negative if and only if $\widetilde \lV := \lV^{[\perp]}$ is maximal non-positive in $\lW$ (see Theorem \ref{thm:v=x} below).

\begin{definition}[(M)-boundary conditions]\label{dfn:M-boundary} Let $D$ be the boundary operator. An operator $M\in \mathcal{L}(\lW;\lW')$ is said to satisfy (M)-boundary conditions if:
	\begin{itemize}
		\item[(M1)] $M$ is non-negative: 
		\begin{align*}
		(\forall u\in \lW)\qquad \Re \dup{\lW'}{Mu}{u}{\lW}\geq 0\,,
		\end{align*}
		where $\Re z$ stands for the real part of a complex number $z$;
		\item[(M2)] the graph space can be decomposed as
		\begin{align*}
		\lW=\ker(D-M)+\ker(D+M)\;.
		\end{align*}
	\end{itemize}

\end{definition}

The following result \cite[Lemma 4.1]{EGC} justifies the usage of the notion boundary operator for $M$, as well.

\begin{lemma}\label{EGC:lem4.1} Let $M\in \mathcal{L}(\lW;\lW')$ satisfies {\rm(M)}-boundary conditions. Then,
	\begin{align*}
	\ker D\;=\;\ker M\;=\;\ker M^*\ \quad \mathrm{and}\quad \ran D\;=\;\ran M\;=\;\ran M^*\;. 
	\end{align*}
\end{lemma}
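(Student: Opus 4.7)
The plan is to exploit the decomposition in (M2) together with the Hermitian character of the boundary operator: since $T_1+\widetilde{T}_1$ is self-adjoint by Theorem \ref{thm:abstractFO-prop}(v), one has $\langle Du,v\rangle=\overline{\langle Dv,u\rangle}$, so $D$ plays the role of a ``self-adjoint'' map in the duality pairing. This will turn (M2) into a mechanism for converting $M$ and $M^*$ into $D$ and back, while (M1) supplies a Cauchy--Schwarz-style rigidity that distinguishes $\ker M$ from $\ker M^*$ only up to $\ker D$.

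For the range equality, I would set up the following identities. Given $u\in\lW$, decompose $u=u_++u_-$ as in (M2); then $Du_+=Mu_+$ and $Du_-=-Mu_-$. The element $y:=u_+-u_-$ is itself decomposed via (M2) as $y=u_++(-u_-)$, since $-u_-\in\ker(D+M)$, and therefore
\begin{equation*}
Dy=Mu_++Mu_-=Mu,\qquad My=Mu_+-Mu_-=Du.
\end{equation*}
These give $\ran M\subseteq\ran D$ and $\ran D\subseteq\ran M$ at once.

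For the kernel equalities I would first pass through $\ker M^*$. If $u\in\ker D$, then for any $v\in\lW$ with decomposition $v=v_++v_-$,
\begin{equation*}
\langle Mv,u\rangle=\langle Dv_+,u\rangle-\langle Dv_-,u\rangle=\langle D(v_+-v_-),u\rangle=\overline{\langle Du,v_+-v_-\rangle}=0,
\end{equation*}
so $M^*u=0$. Conversely, if $M^*u=0$, the same computation backwards gives $\langle Dv,u\rangle=\langle Mv_+,u\rangle-\langle Mv_-,u\rangle=0$ for every $v$, hence $\langle Du,v\rangle=\overline{\langle Dv,u\rangle}=0$ and $Du=0$. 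Thus $\ker D=\ker M^*$. To bridge to $\ker M$ I would use (M1): the Hermitian form $s(u,v):=\tfrac{1}{2}\langle(M+M^*)u,v\rangle$ is positive semidefinite, so Cauchy--Schwarz yields $s(u,u)=0\Rightarrow(M+M^*)u=0$. If $u\in\ker M$ then $s(u,u)=\Re\langle Mu,u\rangle=0$, so $M^*u=-Mu=0$; symmetrically, if $u\in\ker M^*$ then $\langle Mu,u\rangle=\overline{\langle M^*u,u\rangle}=0$, so $s(u,u)=0$ and $Mu=-M^*u=0$. Combining, $\ker D=\ker M=\ker M^*$.

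The step I expect to be the main obstacle is the remaining equality $\ran M^*=\ran D$. Identifying $\lW'\cong\lW$ by Riesz, the kernel equalities yield the closure statement $\overline{\ran M^*}=(\ker M)^\perp=(\ker D)^\perp=\overline{\ran D}$, and likewise $\overline{\ran M}=\overline{\ran D}$. To upgrade these closures to equalities of ranges one needs closedness of $\ran D$. The most structural route is to combine the continuous splitting $\lW=\lW_0\dotplus\ker T_1\dotplus\ker\widetilde{T}_1$ from Theorem \ref{thm:abstractFO-prop}(ix) with the coercivity estimate of part (x), to show that $D$ descends to a bounded-below map on $\lW/\lW_0$ and hence has closed range. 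An alternative route, avoiding closedness, is to dualise the trick of Step~1: using $\ker D=\ker M^*$ one should be able to promote (M2) to an analogous decomposition $\lW=\ker(D-M^*)+\ker(D+M^*)$, after which the identities $D\tilde y=M^*u$ and $M^*\tilde y=Du$ (for an appropriately defined $\tilde y$) give $\ran M^*=\ran D$ by exactly the same device as for $\ran M=\ran D$. Either way, it is this bookkeeping around $M^*$ that I expect to require the most care.
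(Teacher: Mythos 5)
First, a point of reference: the paper does not prove this lemma itself --- it is imported verbatim from \cite[Lemma 4.1]{EGC} --- so there is no internal proof to compare against. Judged on its own terms, your argument for three of the four nontrivial identities is correct and is essentially the classical one: the algebraic device $y=u_+-u_-$, giving $Dy=Mu$ and $My=Du$, cleanly yields $\ran M=\ran D$; the computation $\langle Mv,u\rangle=\langle D(v_+-v_-),u\rangle=\overline{\langle Du,v_+-v_-\rangle}$ together with the Hermitian symmetry of the boundary form (Theorem \ref{thm:abstractFO-prop}(vi)) gives $\ker D=\ker M^*$; and the Cauchy--Schwarz argument for the semidefinite form $\tfrac12\langle(M+M^*)u,v\rangle$ supplied by (M1) gives $\ker M=\ker M^*$. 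All of these steps check out.

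The one genuine gap is $\ran M^*=\ran D$, which you correctly identify but do not close. Your Route A is the right one, but it needs one more ingredient than you state: closedness of $\ran D$ alone gives only $\ran M^*\subseteq\overline{\ran M^*}=(\ker M)^{\circ}=(\ker D)^{\circ}=\overline{\ran D}=\ran D$; for the reverse inclusion you must also know that $\ran M^*$ is closed, which follows from the closed range theorem applied to $M$, since $\ran M=\ran D$ is closed by the first part of your proof. As for closedness of $\ran D$ itself, your suggested mechanism works: on the closed complement $\ker T_1\dotplus\ker\widetilde T_1$ of $\lW_0=\ker D$ the boundary form is negative definite on $\ker T_1$ and positive definite on $\ker\widetilde T_1$ with lower bound $2\mu$ in the $\lH$-norm (which is there equivalent to the graph norm), so testing $Du$ against $v=-\nu+\tilde\nu$ for $u=\nu+\tilde\nu$ shows $D$ is bounded below on that complement. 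Route B (establishing the decomposition $\lW=\ker(D-M^*)+\ker(D+M^*)$) is not a shortcut: that statement is itself of the same depth as the lemma and should not be assumed. With the closed range theorem step inserted, the proof is complete.
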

The statement of equivalence between (X) and (V) boundary conditions is fairly straightforward and can be summarised as follows (cf.~\cite[Theorem 2]{ABcpde}).

\begin{theorem}\label{thm:v=x}
	Let $\lV$ be a subspace of $\lW$.
	\begin{itemize}
		\item[\rm{(a)}] If $\lV$ satisfies {\rm{(V)}}-boundary conditions, then $\lV$ is maximal non-negative in $\lW$ (and $\widetilde \lV := \lV^{[\perp]}$ is maximal non-positive).
		\item[\rm{(b)}] If $\lV$ satisfies {\rm{(X)}}-boundary conditions, then $\lV$ satisfies {\rm{(V)}}-boundary conditions.
	\end{itemize}
\end{theorem}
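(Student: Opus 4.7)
The plan for part (a) is to show that any non-negative subspace $\lV'\supseteq \lV$ must satisfy $\lV'=\lV$. The key ingredient is that the boundary form is \emph{strictly} negative on $\ker T_1\setminus\{0\}$. Indeed, for $k\in\ker T_1$,
\[
\iscp{k}{k}=\scp{T_1 k}{k}-\scp{k}{\widetilde T_1 k}=-\scp{k}{\widetilde T_1 k},
\]
and by Theorem \ref{thm:abstractFO-prop}.v) and viii), $T_1+\widetilde T_1$ agrees on $\lW$ with the self-adjoint operator $\overline{T_0+\widetilde T_0}$ which is bounded below by $2\mu$; thus $\scp{(T_1+\widetilde T_1)k}{k}=\scp{\widetilde T_1 k}{k}$ is real and at least $2\mu\|k\|^2$, whence $\iscp{k}{k}\leq -2\mu\|k\|^2$.

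By Remark \ref{rem:Vcond} together with parts xii) and xiii) of Theorem \ref{thm:abstractFO-prop}, a (V) subspace $\lV$ enjoys the direct decomposition $\lV\dotplus\ker T_1=\lW$. Given $u\in\lV'$, I would write uniquely $u=u_\lV+u_k$ with $u_\lV\in\lV\subseteq\lV'$ and $u_k\in\ker T_1$; then $u_k=u-u_\lV\in\lV'$, and combining $\iscp{u_k}{u_k}\geq 0$ (from $\lV'\subseteq\lW^+$) with the strict bound above forces $u_k=0$, hence $u\in\lV$. The analogous statement that $\widetilde\lV$ is maximal non-positive is obtained by swapping the roles of $T_1$ and $\widetilde T_1$ and using $\widetilde\lV\dotplus\ker\widetilde T_1=\lW$ together with the symmetric strict positivity of the form on $\ker\widetilde T_1$.

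For part (b), assume $\lV\subseteq\lW^+$ is maximal non-negative and verify the three ingredients of (V) by repeatedly invoking maximality. First, since $\lW_0^{[\perp]}=\lW$ by Theorem \ref{thm:abstractFO-prop}.vi), every $w\in\lW_0$ satisfies $\iscp{w}{\cdot}\equiv 0$, so $\lV+\lW_0$ is non-negative and maximality yields $\lW_0\subseteq\lV$. Second, $\lV$ is closed in $\lW$: if $u_n\to u$ in $\lW$ with $u_n\in\lV$, then for any $u'\in\lV$ and $\alpha\in\C$ the element $u'+\alpha u$ is the $\lW$-limit of $u'+\alpha u_n\in\lV$, so by continuity of the boundary form (from $D\in\lL(\lW,\lW')$) one gets $\iscp{u'+\alpha u}{u'+\alpha u}\geq 0$; thus $\lV+\C u$ is non-negative and maximality forces $u\in\lV$. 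Third, $\lV^{[\perp]}\subseteq\lW^-$: if some $v\in\lV^{[\perp]}$ had $\iscp{v}{v}>0$, then $v\notin\lV$ (as vectors of $\lV\cap\lV^{[\perp]}$ are isotropic), and $\lV+\C v$ would be a strictly larger non-negative subspace, contradicting maximality. With $\lV$ closed and $\lW_0\subseteq\lV$, Theorem \ref{thm:abstractFO-prop}.vi) gives $\lV=\lV^{[\perp][\perp]}=\widetilde\lV^{[\perp]}$, which completes (V2).

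The main subtlety I anticipate lies in part (a): one must strengthen the inclusion $\ker T_1\subseteq\lW^-$ to the quantitative bound $\iscp{k}{k}\leq -2\mu\|k\|^2$ in order to exclude isotropic vectors from $\ker T_1\cap\lV'$. Everything else is a clean application of maximality and the continuity of the boundary form, relying on the decomposition results already packaged into Theorem \ref{thm:abstractFO-prop}.
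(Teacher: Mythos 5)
Your proof is correct. The paper itself does not prove this theorem---it only cites \cite[Theorem 2]{ABcpde}---and your argument is essentially the standard one from that reference: for (a), the uniform negativity $\iscp{k}{k}\leq -2\mu\|k\|^2$ on $\ker T_1$ combined with the decomposition $\lV\dotplus\ker T_1=\lW$ (equivalently, bijectivity of $T_1|_{\lV}$) is exactly the right mechanism, and for (b) the three maximality arguments ($\lW_0\subseteq\lV$, closedness via continuity of $D$, and $\lV^{[\perp]}\subseteq\lW^-$ via adjoining a positive vector) are the standard ones, with (V2) then following from the biorthogonality criterion in Theorem \ref{thm:abstractFO-prop}.vi).
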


The topic of equivalence between (V) and (M) boundary conditions appeared to be more challenging. Eventually the following theorem was proven.

\begin{theorem}\label{lem:m-implies-v}
Let $\lW$ be the graph-space and $D$ the boundary operator.
\begin{itemize}
	\item[\rm{(a)}] 	If $M\in \mathcal{L}(\lW;\lW')$ is an operator satisfying (M)-boundary conditions, then the subspace  $\lV:=\ker(D-M)$ satisfies \emph{(V)}-boundary conditions, and $\lV^{[\perp]}=\ker(D+M^*)$.
	\item[\rm{(b)}] If $\lV$ satisfies {\rm{(V)}}-boundary conditions, then there exists an operator $M\in \mathcal{L}(\lW;\lW')$ satisfying (M)-boundary conditions such that $\lV:=\ker(D-M)$ and $\lV^{[\perp]}=\ker(D+M^*)$.
\end{itemize}
\end{theorem}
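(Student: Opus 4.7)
The plan handles the two directions separately: part (a) by direct computation, part (b) by constructing $M$ from a decomposition of $\lW$ induced by $\lV$ together with a Krein-type structure on the quotient.

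For part (a), set $\lV := \ker(D-M)$. For any $u \in \lV$ the identity $Du = Mu$ in $\lW'$ gives $\iscp{u}{u} = \dup{\lW'}{Mu}{u}{\lW}$, and since the boundary form is Hermitian this value is real and equals $\Re\dup{\lW'}{Mu}{u}{\lW} \geq 0$ by (M1); hence $\lV \subseteq \lW^+$. To identify $\lV^{[\perp]}$ with $\ker(D+M^*)$, observe that $v \in \ker(D+M^*)$ iff $\dup{\lW'}{(D+M)u}{v}{\lW} = 0$ for every $u \in \lW$; using (M2) to split $u = u_+ + u_-$ with $u_\pm \in \ker(D \mp M)$, the left-hand side reduces to $2\dup{\lW'}{Mu_+}{v}{\lW} = 2\iscp{u_+}{v}$ (since $Du_+ = Mu_+$), so the condition becomes $\iscp{u_+}{v} = 0$ for every $u_+ \in \lV$, i.e., $v \in \lV^{[\perp]}$. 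An analogous real-part argument applied to $Dv = -M^* v$ then yields $\widetilde\lV \subseteq \lW^-$. Finally, $\lV$ is closed (kernel of a bounded operator) and contains $\lW_0 = \ker D = \ker M$ by Lemma \ref{EGC:lem4.1}, so Theorem \ref{thm:abstractFO-prop}.vi) gives $\widetilde\lV^{[\perp]} = \lV^{[\perp][\perp]} = \lV$, closing (V2).

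For part (b), the crucial preliminary is the decomposition $\lW = \lV + \widetilde\lV$ with $\lV \cap \widetilde\lV = \lW_0$. Passing to the quotient $\hat\lW := \lW/\lW_0$, the boundary form descends to a nondegenerate Hermitian form, and by Theorem \ref{thm:abstractFO-prop}.ix) the images of $\ker T_1$ and $\ker\widetilde T_1$ form a fundamental decomposition, turning $\hat\lW$ into a Krein space. The images of $\lV$ and $\widetilde\lV$ are mutual $[\perp]$-complements (by (V2)) and are, by Theorem \ref{thm:v=x}, respectively maximal positive and maximal negative; standard Krein space theory then yields $\hat\lW = \hat\lV \dotplus \hat{\widetilde\lV}$, which lifts to the displayed identity (the inclusion $\lV \cap \widetilde\lV \subseteq \lW_0$ follows because any element in the intersection is $\iscp{\cdot}{\cdot}$-orthogonal to all of $\lW = \lV + \widetilde\lV$, hence lies in $\lW^{[\perp]} = \lW_0$). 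Given this decomposition, define $Mu := Du_1 - Du_2$ whenever $u = u_1 + u_2$ with $u_1 \in \lV, u_2 \in \widetilde\lV$; this is unambiguous since two such representations differ by an element of $\lV \cap \widetilde\lV = \lW_0 = \ker D$, and it belongs to $\mathcal{L}(\lW;\lW')$ thanks to the continuity of the quotient projection in the Krein space. By design $\lV \subseteq \ker(D-M)$ and $\widetilde\lV \subseteq \ker(D+M)$, giving (M2). For (M1), a direct expansion produces
\[
\Re \dup{\lW'}{Mu}{u}{\lW} = \iscp{u_1}{u_1} - \iscp{u_2}{u_2} \geq 0,
\]
using (V1) together with the fact that the cross terms $\iscp{u_1}{u_2} - \iscp{u_2}{u_1}$ are purely imaginary. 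Applying part (a) to this $M$, the subspace $\ker(D-M)$ satisfies (V) and is therefore maximal non-negative by Theorem \ref{thm:v=x}; since $\lV$ is also maximal non-negative and $\lV \subseteq \ker(D-M)$, maximality forces $\lV = \ker(D-M)$, and then $\lV^{[\perp]} = \ker(D+M^*)$ is immediate from (a).

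The main obstacle is the Krein-space step in part (b): confirming that $(\hat\lW, \iscp{\cdot}{\cdot})$ is genuinely a Krein space, that $\hat\lV$ and $\hat{\widetilde\lV}$ are maximal definite complements rather than merely definite subspaces, and that the algebraic projection induced by this decomposition actually produces a bounded operator into $\lW'$. Once this structural decomposition is secured, the construction of $M$ and the verification of (M1)--(M2) reduce to routine bookkeeping using the properties collected in Theorem \ref{thm:abstractFO-prop}.
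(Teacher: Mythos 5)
Part (a) of your argument is correct and follows the standard route (it is essentially the proof of \cite[Theorem 4.2]{EGC}, which the paper simply cites). The problem is in part (b), where your ``crucial preliminary'' decomposition $\lW=\lV+\widetilde\lV$ with $\lV\cap\widetilde\lV=\lW_0$ is false in general, and the step ``standard Krein space theory then yields $\hat\lW=\hat\lV\dotplus\hat{\widetilde\lV}$'' is precisely where it breaks. In a Krein space, a maximal non-negative subspace and its $[\perp]$-orthocomplement need \emph{not} span the whole space: if the angle operator of $\hat\lV$ is a contraction $U$ with $1\in\sigma(U^*U)$, the sum $\hat\lV+\hat\lV^{[\perp]}$ is only dense (or smaller). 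The extreme case is $\lV=\lV^{[\perp]}$, which does occur for subspaces satisfying (V)-boundary conditions (Theorem \ref{thm:classif}(iii); concretely, $\alpha=1$ in Example \ref{ex:ex-semigroup}, where $\dom T^1=\{u:u(1)=u(0)\}=(\dom T^1)^{[\perp]}$). There $\lV+\widetilde\lV=\lV\neq\lW$ and $\lV\cap\widetilde\lV=\lV\supsetneq\lW_0$, so your operator $M u:=Du_1-Du_2$ is not defined on all of $\lW$ and the whole construction collapses. Your argument for $\lV\cap\widetilde\lV\subseteq\lW_0$ is also circular, since it presupposes the failed identity $\lW=\lV+\widetilde\lV$. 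This is exactly the difficulty the paper flags when it says the converse ``boils down to closedness of the subspace $\lV+\widetilde\lV$.''

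The repair is to complement $\lV$ by a closed non-positive subspace $\lW_2$ that is \emph{not} taken to be $\lV^{[\perp]}$. The paper (Theorem \ref{tm:V-implies-M}) chooses $\lW_2=\ker T_1$: the decomposition $\lW=\lV\dotplus\ker T_1$ holds automatically by Theorem \ref{thm:abstractFO-prop}(xiii) (since $T_1|_\lV$ is bijective), $\ker T_1\subseteq\lW^-$ by part (ix), and one sets $M=D(1-2p_2)$ with $p_2$ the projection onto $\ker T_1$; the general version (Theorem \ref{thm:ABcpde-thm08}, from \cite{ABcpde}) allows any such $\lW_2$, and the genuinely hard Krein-space input there is the \emph{existence} of some non-positive closed complement, not that $\lV^{[\perp]}$ works. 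Note the resulting (M2) splitting is $\lW=\ker(D-M)+\ker(D+M)$ with $\ker(D+M)=\lW_0+\lW_2$, which in general differs from $\lV^{[\perp]}=\ker(D+M^*)$; your proposal conflates these two kernels. Your closing maximality argument (identifying $\lV$ with $\ker(D-M)$ via Theorem \ref{thm:v=x}) is fine once a valid $M$ is in hand, but as written the construction of $M$ does not exist for all admissible $\lV$.
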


The (a) part of the above theorem was proved in \cite[Theorem 4.2]{EGC}. The converse appeared to be more challenging. In some cases, this question boils down to closedness of the subspace $\lV+\widetilde \lV$ in the graph space $\lW$ (see e.g.~\cite[Section 4]{EGC}). In \cite[Corollary 3]{ABcpde}, this problem has been addressed in full generality. We shall give more details on this subject in the fifth section.

\subsection{The von Neumann extension theory} 

Recently, in \cite{ES25}, the authors presented a classification theory in the spirit of the von Neumann approach, which is well-known theory for symmetric as well as skew-symmetric operators. Here we briefly recall the results of the paper \cite{ES25} (see also \cite[Chapter 3]{Soni24}), in the context of the requirement of this manuscript.

\begin{theorem}\label{thm:abstractFO_L0+S}
A pair of densely defined operators $(T_0,\widetilde{T}_0)$ on $\lH$ 
is a pair of abstract Friedrichs operators if and only if there exist a densely defined skew-symmetric operator $L_0$ and a bounded self-adjoint operator $S$ with strictly positive bottom,
both on $\lH$, such that
\begin{equation}\label{eq:abstractFO_L0+S}
	T_0=L_0+S \qquad \hbox{and} \qquad \widetilde{T}_0=-L_0+S \,.
\end{equation}
If this is the case then $S=\frac{1}{2}\left( \overline{T_0 + \widetilde T_0}\right)$, and thus
\begin{equation}\label{eq:S-ineq}
\|Su\|\le\lambda \|u\|, \quad \scp{S u}{u} \geq \mu\|u\|^2 \;, \quad u\in\lH\,,
\end{equation}
where $\lambda,\mu>0$ are constant appearing in {\rm (T2)} and {\rm (T3)}, respectively.

For a given pair, the decomposition \eqref{eq:abstractFO_L0+S}
is unique. Furthermore, we have the following:
\begin{itemize}
    \item[\rm{(i)}] If we denote $L_1:=-L_0^*\supseteq L_0$, then we have
\begin{equation}\label{eq:TLSnotation}
\begin{split}
T_1 &= L_1 + S \;, \qquad
\end{split}
\begin{split}
\widetilde{T}_1 &=-L_1+S \;.
\end{split}
\end{equation}
\item[\rm{(ii)}]  $\lW_0=\dom \overline{L}_0$ and $\lW=\dom L_1$, 
i.e.~spaces $\lW_0$ and $\lW$ are independent of $S$. 

\item[\rm{(iii)}] The boundary form satisfies
\begin{equation}\label{eq:iscp_L1}
\iscp{u}{v} = \scp{L_1 u}{v}+ \scp{u}{L_1v} \;, \quad u,v\in\lW \,.
\end{equation}
\end{itemize}
\end{theorem}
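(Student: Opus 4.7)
The plan is to prove the two directions of the equivalence and then read off the remaining assertions from the explicit formulas produced along the way.

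For the \emph{backward direction}, I would start by \emph{assuming} that we are given $L_0$ densely defined skew-symmetric and $S$ bounded self-adjoint on $\lH$ with $\scp{Su}{u}\geq\mu\|u\|^2$ for some $\mu>0$, and simply verify (T1)--(T3) for $T_0:=L_0+S$ and $\widetilde T_0:=-L_0+S$. Since $S$ is everywhere defined and bounded, the common domain is $\dom L_0$, which is dense. Condition (T1) follows from skew-symmetry of $L_0$ and self-adjointness of $S$; (T2) is immediate from $T_0+\widetilde T_0=2S$ on $\dom L_0$, giving $\lambda=\|S\|$; and (T3) follows from the positive lower bound of $S$ with the same $\mu$.

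For the \emph{forward direction}, assume $(T_0,\widetilde T_0)$ is a joint pair of abstract Friedrichs operators. By Theorem~\ref{thm:abstractFO-prop}.v) the operator $\overline{T_0+\widetilde T_0}$ is bounded, everywhere defined and self-adjoint, so I would set
\[
S:=\tfrac12\,\overline{T_0+\widetilde T_0}\,,\qquad L_0:=T_0-S\big|_{\lD}\,,
\]
the latter being densely defined on $\lD$. Since $(T_0+\widetilde T_0)\phi=2S\phi$ for $\phi\in\lD$, one gets $\widetilde T_0=-L_0+S$ on $\lD$ for free. Then (T1) together with self-adjointness of $S$ yields
\[
\scp{L_0\phi}{\psi}=\scp{T_0\phi}{\psi}-\scp{S\phi}{\psi}
=\scp{\phi}{\widetilde T_0\psi}-\scp{\phi}{S\psi}=-\scp{\phi}{L_0\psi}\,,
\]
which is precisely skew-symmetry of $L_0$. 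The norm and lower bound estimates in \eqref{eq:S-ineq} are then exactly (T2) and (T3) applied to $\overline{T_0+\widetilde T_0}=2S$, combined with Theorem~\ref{thm:abstractFO-prop}.viii).

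For \emph{uniqueness}, if $T_0=L_0'+S'$ and $\widetilde T_0=-L_0'+S'$ is another such decomposition, then $T_0+\widetilde T_0=2S'$ on $\lD$; since $S'$ is bounded everywhere defined (hence closed), taking closures gives $2S=\overline{T_0+\widetilde T_0}=2S'$, whence $L_0'=T_0-S=L_0$. For the remaining items I would use that bounded everywhere-defined perturbations commute with adjoints and closures: since $\widetilde T_0=-L_0+S$ and $S^*=S$,
\[
T_1=\widetilde T_0^*=(-L_0+S)^*=-L_0^*+S=L_1+S\,,
\]
and analogously $\widetilde T_1=L_0^*+S=-L_1+S$, proving (i) (and the inclusion $L_0\subseteq -L_0^*=L_1$ is just skew-symmetry). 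Item~(ii) follows because boundedness of $S$ gives $\overline{T}_0=\overline{L}_0+S$ and $\dom T_1=\dom L_1$, while (iii) is a direct computation from \eqref{eq:D1}: plugging in $T_1=L_1+S$ and $\widetilde T_1=-L_1+S$ cancels the two $S$-terms and leaves \eqref{eq:iscp_L1}.

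I do not anticipate any genuinely hard step; the only point requiring care is bookkeeping between the densely defined operators and the everywhere defined bounded $S$, in particular the fact that $\overline{T_0+\widetilde T_0}$ restricted to $\lD$ agrees with $T_0+\widetilde T_0$, which is what legitimises defining $L_0$ on $\lD$ without leaving the Friedrichs framework.
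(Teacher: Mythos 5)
Your proposal is correct and follows the same route as the source the paper cites for this result (\cite{ES25}): the theorem is only recalled here without proof, but its statement already dictates the construction $S=\tfrac12\overline{T_0+\widetilde T_0}$, $L_0=(T_0-S)|_{\lD}$, and your verification of skew-symmetry via (T1), of \eqref{eq:S-ineq} via (T2)--(T3), of uniqueness, and of (i)--(iii) via the stability of adjoints and closures under the bounded everywhere-defined perturbation $S$ is exactly the intended argument. No gaps.
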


\begin{corollary}\label{cor:signed_bij}
Let $(T_0,\widetilde{T}_0)$ be a joint pair of abstract Friedrichs operators
on $\lH$ and let $\lV\subseteq\lW$ be a closed subspace containing $\lW_0$
such that $\lV\subseteq\lW^+$ and $\lV^{[\perp]}\subseteq\lW^-$ (with respect to $(T_0,\widetilde{T}_0)$).
For any joint pair of abstract Friedrichs operators $(A_0,\widetilde{A}_0)$ 
on $\lH$ such that 
$$
(A_0-\widetilde{A}_0)^*=(T_0-\widetilde{T}_0)^*
$$ 
we have that $\bigl((\widetilde{A}_0)^*|_{\lV},(A_0)^*|_{\lV^{[\perp]}}\bigr)$
is a pair of bijective realisations with signed boundary map.
\end{corollary}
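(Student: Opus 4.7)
The plan is to reduce the statement to a direct application of Theorem \ref{thm:abstractFO-prop}(xii) to the pair $(A_0,\widetilde A_0)$. The key point is that the hypothesis $(A_0-\widetilde A_0)^*=(T_0-\widetilde T_0)^*$ is an efficient way of saying that the two joint pairs share the same skew-symmetric component, from which it follows that they share the same graph space and the same boundary form; the assumptions on $\lV$ then transfer verbatim.

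First, I would invoke Theorem \ref{thm:abstractFO_L0+S} to decompose $T_0=L_0+S$, $\widetilde T_0=-L_0+S$ and $A_0=L_0^A+S^A$, $\widetilde A_0=-L_0^A+S^A$, where $L_0,L_0^A$ are densely defined skew-symmetric and $S,S^A$ are bounded self-adjoint with strictly positive bottom. Since $T_0-\widetilde T_0=2L_0$ and $A_0-\widetilde A_0=2L_0^A$, the hypothesis reduces to $(L_0^A)^*=L_0^*$, i.e.\ $L_1^A=L_1$ in the notation of Theorem \ref{thm:abstractFO_L0+S}(i), and hence $\overline{L_0^A}=\overline{L_0}$ as well.

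Next, by Theorem \ref{thm:abstractFO_L0+S}(ii)--(iii), the spaces $\lW_0=\dom\overline{L_0}$ and $\lW=\dom L_1$ depend only on the skew-symmetric part, and the boundary form is given on $\lW$ by $\iscp{u}{v}=\scp{L_1 u}{v}+\scp{u}{L_1 v}$ for both pairs. Consequently the cones $\lW^\pm$ and the indefinite orthogonal complement $\lV^{[\perp]}$ coincide for $(T_0,\widetilde T_0)$ and $(A_0,\widetilde A_0)$. The two graph norms $\|\cdot\|_{T_1}$ and $\|\cdot\|_{A_1}$ are equivalent, both differing from the graph norm of $L_1$ by a bounded perturbation, so closedness of $\lV$ is an intrinsic property of $\lW$.

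Finally, the hypotheses of Theorem \ref{thm:abstractFO-prop}(xii) --- $\lV$ closed with $\lW_0\subseteq\lV\subseteq\lW^+$ and $\lV^{[\perp]}\subseteq\lW^-$ --- therefore hold verbatim for the pair $(A_0,\widetilde A_0)$ as well. Applying that theorem yields that $A_1|_\lV$ and $\widetilde A_1|_{\lV^{[\perp]}}$ are bijective realisations with signed boundary map; since $A_1=(\widetilde A_0)^*$ and $\widetilde A_1=(A_0)^*$ by Theorem \ref{thm:abstractFO-prop}(i), this is precisely the claim. The only step that demands any thought is the first one --- recognising that the algebraic identity $(A_0-\widetilde A_0)^*=(T_0-\widetilde T_0)^*$ is exactly the statement that the skew-symmetric parts agree --- after which everything else is a routine transfer of structure.
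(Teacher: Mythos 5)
Your proof is correct, and it follows exactly the route the paper intends: the paper states this corollary without proof (recalling it from \cite{ES25}, immediately after Theorem \ref{thm:abstractFO_L0+S}), and the intended argument is precisely yours --- observe that $(A_0-\widetilde A_0)^*=(T_0-\widetilde T_0)^*$ forces the two pairs to share the same maximal skew-symmetric part $L_1$, hence the same $\lW_0$, $\lW$, boundary form, cones $\lW^\pm$ and $[\perp]$-complements, and then apply Theorem \ref{thm:abstractFO-prop}(xii) to $(A_0,\widetilde A_0)$. No gaps; the remark that closedness of $\lV$ is intrinsic because all three graph norms ($T_1$, $A_1$, $L_1$) differ by bounded perturbations is the one detail worth spelling out, and you did.
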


\begin{theorem}\label{thm:classif}
Let $(T_0,\widetilde{T}_0)$ be a joint pair of abstract Friedrichs operators 
on $\lH$ and let $T$ be a closed realisation of $T_0$, i.e.~$T_0\subseteq T\subseteq T_1$. For a mapping $U:(\ker\widetilde{T}_1,\iscp{\cdot}{\cdot}) \to (\ker T_1,-\iscp{\cdot}{\cdot})$ we define $\lV_U:=\bigl\{u_0+U\tilde\nu + \tilde\nu : u_0\in\lW_0, \, \tilde\nu\in\ker \widetilde{T}_1\bigr\}$.
\begin{itemize}
\item[\rm{(i)}] $T$ is bijective if and only if there exists a bounded linear operator
$U:\ker\widetilde{T}_1\to\ker T_1$ such that $\dom T=\lV_U$.

\item[\rm{(ii)}]  $T$ is a bijective realisation with signed boundary map 
if and only if there exists a linear operator $U:(\ker\widetilde{T}_1,\iscp{\cdot}{\cdot}) \to (\ker T_1,-\iscp{\cdot}{\cdot})$ such that $\|U\|\leq 1$ and $\dom T=\lV_U$.

\item[\rm{(iii)}] $\dom T= \dom T^*$ if and only if there exists a unitary transformation
$U:(\ker\widetilde{T}_1,\iscp{\cdot}{\cdot}) \to (\ker T_1,-\iscp{\cdot}{\cdot})$ such that $\dom T=\lV_U$.

\item[\rm{(iv)}] The mapping $U\mapsto T_1|_{\lV_U}$, is a one-to-one correspondence between the classifying operators $U$ and the realisations $T$, i.e.~$\dom T$, in each of the above cases.
\end{itemize}
\end{theorem}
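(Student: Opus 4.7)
The plan is to build everything on top of the decomposition $\lW = \lW_0 \dotplus \ker T_1 \dotplus \ker\widetilde{T}_1$ from Theorem~\ref{thm:abstractFO-prop}(ix), together with the bijectivity criterion in Theorem~\ref{thm:abstractFO-prop}(xiii) and the adjointness in Theorem~\ref{thm:abstractFO-prop}(xi). The starting observation, which underlies all four parts, is the following. Since $T$ is closed and $T_0 \subseteq T \subseteq T_1$, the domain $\lV := \dom T$ is a closed subspace of $\lW$ (in the graph topology) with $\lW_0 \subseteq \lV$. Hence $\lV$ is determined, modulo $\lW_0$, by a linear relation between $\ker T_1$ and $\ker\widetilde{T}_1$; the content of the theorem is to identify when this relation is the graph of a single-valued operator $U$ and to translate analytic properties of $T$ into metric properties of $U$.

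For part (i), by Theorem~\ref{thm:abstractFO-prop}(xiii) bijectivity of $T$ is equivalent to $\lV \dotplus \ker T_1 = \lW$. Combined with \eqref{eq:decomposition} this forces the projection $p_{\tilde{\mathrm{k}}}|_\lV : \lV \to \ker\widetilde{T}_1$ to be a bijection onto $\ker\widetilde{T}_1$; for each $\tilde\nu\in\ker\widetilde{T}_1$ there is then a unique $\nu\in\ker T_1$ (and $u_0\in\lW_0$ which is irrelevant because $\lW_0\subseteq\lV$) with $u_0+\nu+\tilde\nu\in\lV$. Setting $U\tilde\nu:=\nu$ yields $\dom T=\lV_U$. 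Boundedness of $U$ is where the first real work lies: since the projections $p_\mathrm{k}, p_{\tilde{\mathrm{k}}}\in\lL(\lW,\lH)$ of Theorem~\ref{thm:abstractFO-prop}(ix) are continuous, the three summands in \eqref{eq:decomposition} are topologically complementary in $\lW$, so closedness of $\lV_U$ in $\lW$ is equivalent to the closedness of the graph of $U: \ker\widetilde{T}_1\to\ker T_1$ in $\ker\widetilde{T}_1\oplus\ker T_1$ (subspaces of $\lH$). The closed graph theorem then delivers $U\in\lL(\ker\widetilde{T}_1,\ker T_1)$. The converse is routine: for bounded $U$, $\lV_U$ is closed in $\lW$ and satisfies $\lV_U\dotplus\ker T_1=\lW$.

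For parts (ii) and (iii), the key computation is that because $\lW_0, \ker T_1, \ker\widetilde{T}_1$ are pairwise $\iscp{\cdot}{\cdot}$-orthogonal and $\lW_0\subseteq\lW^+\cap\lW^-$, for $w = u_0 + U\tilde\nu + \tilde\nu\in \lV_U$ one has
\[
\iscp{w}{w} = \iscp{U\tilde\nu}{U\tilde\nu} + \iscp{\tilde\nu}{\tilde\nu}
= -\bigl(-\iscp{U\tilde\nu}{U\tilde\nu}\bigr) + \iscp{\tilde\nu}{\tilde\nu}.
\]
By Theorem~\ref{thm:abstractFO_L0+S}(iii) and (T3), the forms $\iscp{\cdot}{\cdot}$ on $\ker\widetilde{T}_1$ and $-\iscp{\cdot}{\cdot}$ on $\ker T_1$ are genuine inner products (they control $\|\cdot\|$ by $2\mu$). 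Hence $\lV_U\subseteq\lW^+$ is equivalent to $\|U\tilde\nu\|_{-\iscp{\cdot}{\cdot}}\le\|\tilde\nu\|_{\iscp{\cdot}{\cdot}}$, i.e. $\|U\|\le 1$. A parallel computation, using that $\lW_0$ is $\iscp{\cdot}{\cdot}$-orthogonal to everything, identifies
\[
\lV_U^{[\perp]} = \bigl\{u_0 + \nu + U^*\nu : u_0\in\lW_0, \, \nu\in\ker T_1\bigr\},
\]
where $U^*:(\ker T_1,-\iscp{\cdot}{\cdot})\to(\ker\widetilde{T}_1,\iscp{\cdot}{\cdot})$ is the Hilbert-space adjoint of $U$ between these inner product spaces. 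Then $\lV_U^{[\perp]}\subseteq\lW^-$ is equivalent to $\|U^*\|\le 1$, which is automatic once $\|U\|\le 1$, giving (ii). For (iii), Theorem~\ref{thm:abstractFO-prop}(xi) gives $\dom T^* = \lV_U^{[\perp]}$; equating the two parametrisations of $\lV_U$ and $\lV_U^{[\perp]}$ and invoking the directness of \eqref{eq:decomposition} forces $U^*=U^{-1}$, i.e. $U$ unitary, and conversely.

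Part (iv), uniqueness of $U$, is immediate from the directness of \eqref{eq:decomposition}: if $\lV_U=\lV_{U'}$ then for each $\tilde\nu$ the element $U\tilde\nu + \tilde\nu - U'\tilde\nu - \tilde\nu = (U-U')\tilde\nu\in\ker T_1$ must lie in $\lW_0$, hence vanish. The main obstacle I expect is the verification that $\lV_U$ is closed in $\lW$ for bounded $U$ and, conversely, that closedness of $\lV_U$ forces boundedness of $U$; both rest on promoting the algebraic decomposition \eqref{eq:decomposition} to a topological one via the continuity of $p_\mathrm{k}$ and $p_{\tilde{\mathrm{k}}}$. Everything else is either linear-algebraic bookkeeping or a direct invocation of results already catalogued in Theorem~\ref{thm:abstractFO-prop}.
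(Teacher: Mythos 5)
The paper itself does not prove Theorem \ref{thm:classif}: it is quoted from \cite{ES25} as background material, so there is no in-paper proof to compare against. Your argument is nevertheless a correct, self-contained reconstruction along exactly the lines that the von Neumann-type decomposition \eqref{eq:decomposition} is designed for—parametrising closed subspaces between $\lW_0$ and $\lW$ by graphs of operators $\ker\widetilde{T}_1\to\ker T_1$, reducing closedness of $\lV_U$ to boundedness of $U$ via the continuity of $p_{\mathrm{k}},p_{\mathrm{\tilde k}}$ and the closed graph theorem, and identifying $\lV_U^{[\perp]}$ as the graph of the adjoint $U^*$ with respect to the definite inner products $\pm\iscp{\cdot}{\cdot}$ on the kernels. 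Two small points should be tightened. First, $p_{\mathrm{\tilde k}}|_{\lV}$ is surjective but not injective (its kernel contains $\lW_0$); what you actually need, and what your subsequent sentence correctly extracts, is that $\lV\dotplus\ker T_1=\lW$ forces $\lV\cap\ker T_1=\{0\}$, whence the $\ker T_1$-component $\nu$ is uniquely determined by $\tilde\nu$. Second, in part (iii) you invoke Theorem \ref{thm:abstractFO-prop}(xi) to write $\dom T^*=\lV_U^{[\perp]}$, which presupposes that $\dom T$ is already of the form $\lV_U$; for the forward implication you must first note that $\lV=\lV^{[\perp]}$ implies $\lV\subseteq\lW^+\cap\lW^-$ (last remark in Theorem \ref{thm:abstractFO-prop}(vi)), so that $T$ is a bijective realisation with signed boundary map by part (xii), and part (ii) of the present theorem then supplies the contraction $U$ whose unitarity you deduce from $UU^*=U^*U=\mathbbm{1}$. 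With these insertions the proof is complete.
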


\section{\texorpdfstring{$m$-accretive}{m-accretive} extensions of abstract Friedrichs operators}\label{sec:Overview}
In this section we shall further explore the decomposition from Theorem \ref{thm:abstractFO_L0+S} in the context of accretive operators. Since, for historical reasons, the use of term \emph{accretive} is not uniform in the literature, we shall provide a brief overview of basic notions that we shall use. 

A linear operator $A: \dom A \subseteq \lH \to \lH$ on a complex Hilbert space $\lH$ is said to be \emph{accretive} if for all $x \in \dom A$, we have
\begin{equation*}
\Re \scp{Ax}{x} \geq 0.
\end{equation*}
    An accretive operator is said to be \emph{maximal accretive} if it has no proper accretive extension, while an accretive operator $A: \dom A \subseteq \lH \to \lH$ is called \emph{$m$-accretive} if $I+A$ is onto (surjective). An operator $A$ is referred to as (\emph{maximal}) \emph{dissipative} if $-A$ is  (maximal) accretive. 
    We consider only densely defined operators on a Hilbert space in this paper. For these operators, the notions of maximal accretivity and m-accretivity are equivalent \cite[p.~201]{P}.

In the context of linear differential equations, maximal accretive operators generate $C_0-$ semigroups of contractions (the Lumer-Phillips theorem - see \cite{Engel2, Pazy}) that provide solutions to time-evolution problems. The notion of $m$-accretivity is also central to the Hille-Yosida theorem \cite{Engel2, Pazy}, through the concept of the so-called range condition: an accretive operator is $m-$accretive if and only if the range of $\lambda I + A$ equals $\lH$ for some (and thus all) $\lambda >0$. In the context of nonlinear operators this is closely related the Minty-Browder theorem \cite{minty1962monotone, zeidler1985nonlinear}, a result initially proven for monotone operators.


In the following results we assume that $(T_0, \widetilde T_0)$ is a joint pair of abstract Friedrichs operators on $\lH$ and make use of decomposition $T_i=L_i + S$ from Theorem \ref{thm:abstractFO_L0+S}, with $i$ being $0, 1$ or void.
\begin{lemma}\label{lem:accretive}
    If $T$ is a realisation of $T_0$, i.e.~$T_0\subseteq T\subseteq T_1$, then  the following statements are equivalent:
   \begin{enumerate}
   	\item[\rm{(i)}] $T$ is accretive;
   	\item[\rm{(ii)}] $L$ is accretive;
   	\item[\rm{(iii)}]  $\dom T \,\,(= \dom L)$ is non-negative (contained in $\lW^+$).
   \end{enumerate}
\end{lemma}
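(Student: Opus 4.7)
The plan is to close the cycle $(\mathrm{iii}) \Rightarrow (\mathrm{ii}) \Rightarrow (\mathrm{i}) \Rightarrow (\mathrm{iii})$. Writing $T=L+S$, note that $\dom T=\dom L$ because $S$ is everywhere defined and bounded, and that $L_0\subseteq L\subseteq L_1$ follows at once from $T_0\subseteq T\subseteq T_1$. Specialising formula \eqref{eq:iscp_L1} to $v=u$ yields, for every $u\in\dom L$,
\[
\iscp{u}{u} = 2\Re\scp{L_1u}{u} = 2\Re\scp{Lu}{u},
\]
so $u\in\lW^+$ is equivalent to $\Re\scp{Lu}{u}\geq 0$, which delivers $(\mathrm{ii})\Leftrightarrow(\mathrm{iii})$ immediately. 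The implication $(\mathrm{ii})\Rightarrow(\mathrm{i})$ is then a direct consequence of \eqref{eq:S-ineq}: for $u\in\dom T$,
\[
\Re\scp{Tu}{u} = \Re\scp{Lu}{u}+\scp{Su}{u} \geq \mu\|u\|^2 \geq 0.
\]

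The main work goes into $(\mathrm{i})\Rightarrow(\mathrm{iii})$, which we prove by contraposition. Suppose some $u\in\dom T$ satisfies $\iscp{u}{u}<0$. The key observation is that, by Theorem \ref{thm:abstractFO-prop}.vi), $\lW_0$ lies in the radical of the boundary form (since $\lW^{[\perp]}=\lW_0$ and $\lW_0^{[\perp]}=\lW$), so that $\iscp{u+v}{u+v}=\iscp{u}{u}$ for every $v\in\lW_0$. Since $\lD\subseteq\lW_0\cap\dom T$ is dense in $\lH$ (as $T_0$ is densely defined), we choose $v_n\in\lD$ with $v_n\to -u$ in $\lH$, and set $w_n:=u+v_n\in\dom T$; then $w_n\to 0$ in $\lH$ while $\iscp{w_n}{w_n}=\iscp{u}{u}$. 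Combining the identities above,
\[
\Re\scp{Tw_n}{w_n} = \tfrac{1}{2}\iscp{w_n}{w_n}+\scp{Sw_n}{w_n} = \tfrac{1}{2}\iscp{u}{u}+\scp{Sw_n}{w_n} \longrightarrow \tfrac{1}{2}\iscp{u}{u}<0,
\]
since the boundedness of $S$ (from \eqref{eq:S-ineq}) forces $\scp{Sw_n}{w_n}\to 0$. For $n$ large this contradicts $(\mathrm{i})$.

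The hard part is precisely $(\mathrm{i})\Rightarrow(\mathrm{iii})$: pointwise, the identity $\Re\scp{Tu}{u}=\tfrac{1}{2}\iscp{u}{u}+\scp{Su}{u}$ lets the strictly positive contribution of $S$ absorb a negative value of the boundary form on any fixed vector, so one cannot conclude $(\mathrm{iii})$ from $(\mathrm{i})$ at a single $u$. The perturbation by elements of $\lW_0$ is what decouples $\iscp{\cdot}{\cdot}$ from $\|\cdot\|_\lH$: it preserves the value of the boundary form while driving the $\lH$-norm, and hence the $S$-term, to zero.
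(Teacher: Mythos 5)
Your proof is correct and follows essentially the same strategy as the paper's: the identity $\Re\scp{Lu}{u}=\tfrac{1}{2}\iscp{u}{u}$ for (ii)$\Leftrightarrow$(iii), the positivity of $S$ for (ii)$\Rightarrow$(i), and for (i)$\Rightarrow$(iii) a perturbation by elements of the dense set $\lD\subseteq\lW_0$ that preserves the boundary form while driving the $\lH$-norm (hence the $S$-term) to zero. The only cosmetic difference is that you invoke the radical property $\lW^{[\perp]}=\lW_0$ from Theorem \ref{thm:abstractFO-prop}.vi) to approximate $-u$ directly, whereas the paper routes through the decomposition $u=u_0+\nu+\tilde\nu$ of Theorem \ref{thm:abstractFO-prop}.ix) and approximates $-\nu-\tilde\nu$; both are valid and yield the same conclusion.
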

\begin{proof} 
Note that \eqref{eq:iscp_L1} implies
\begin{align}\label{eq:iscp-L}
\Re\scp{L_1 u}{u}=\frac{1}{2}\iscp{u}{u}\,,\quad u\in \lW\;,
\end{align}
which proves equivalence between (ii) and (iii), as $L=L_1|_{\dom T}$.
From $T=L+S$  and the second inequality in (\ref{eq:S-ineq}) it is clear that accretivity of $L$ implies the accretivity of $T$.
It remains to prove that (i) implies (iii): from $T=L+S$ and (\ref{eq:iscp-L}), we have
\begin{equation}
 \begin{aligned}\label{eq:iscp-T-L}
\Re\scp{Tu}{u}=&\Re\scp{Lu}{u} + \scp{S u}{u}\\
=&\frac{1}{2}\iscp{u}{u} + \scp{S u}{u}\,, \quad u \in \dom T=\dom L\,.
\end{aligned}
\end{equation}
For an arbitrary $u\in \dom T$, by Theorem \ref{thm:abstractFO-prop}(ix), there exist $u_0\in \lW_0$, $\nu\in \ker T_1$ and $\tilde\nu\in \ker\widetilde{T}_1$ such that $u=u_0+\nu+\tilde\nu$. Since $\lD=\dom T_0\subseteq \dom T$, for any $v_0\in \lD$ we also have that $v=v_0+\nu+\tilde\nu\in \dom T$, and $\iscp{u}{u}=\iscp{\nu}{\nu}+\iscp{\tilde\nu}{\tilde\nu} = \iscp{v}{v}$ by $\iscp{\cdot}{\cdot}$-orthogonality from Theorem \ref{thm:abstractFO-prop}(ix). Thus, if $T$ is accretive, using this, (\ref{eq:iscp-L}), (\ref{eq:iscp-T-L}) and the first inequality in (\ref{eq:S-ineq}), we get  
    \begin{align*}
        \Re\scp{Lu}{u}=\frac{1}{2} \iscp{u}{u}=\frac{1}{2} \iscp{v}{v}= \Re\scp{Tv}{v} - \scp{Sv}{v}\geq -\lambda\|v\|^2 = -\lambda\|v_0+\nu+\tilde\nu\|^2\;.
    \end{align*}
    Since $\dom T_0$ is dense in $\lH$, we can choose $v_0$ arbitrarily close to $-\nu-\tilde\nu$ with respect to norm $\|\cdot\|$. Hence,
    \begin{align*}
        \Re\scp{Lu}{u}\geq 0\,,
    \end{align*}
    implying that $L$ is accretive.
    
\end{proof}

\begin{theorem}\label{thm:diss-02}
   If $T$ is a realisation of $T_0$, then the following assertions are equivalent:
    \begin{enumerate}
        \item[\rm{(i)}] $T$ is $m-$accretive;
        \item[\rm{(ii)}] $L$ is $m-$accretive;
        \item[\rm{(iii)}] $T$ is a bijective realisation with signed boundary map, i.e.~$\dom T$ satisfies {\rm(V)}- (equivalently {\rm(X)}-) boundary conditions.
    \end{enumerate}
\end{theorem}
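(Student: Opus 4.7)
The plan is to close the cycle (ii) $\Rightarrow$ (i) $\Rightarrow$ (iii) $\Rightarrow$ (ii), relying throughout on the decomposition $T = L + S$ from Theorem~\ref{thm:abstractFO_L0+S}, Lemma~\ref{lem:accretive}, and Corollary~\ref{cor:signed_bij}. The implication (ii) $\Rightarrow$ (i) is a standard perturbation argument: accretivity of $T = L + S$ is immediate since $\scp{Su}{u} \geq \mu\|u\|^2$, while the range condition follows from the factorisation
\begin{equation*}
\alpha I + T \;=\; \bigl(I + S(\alpha I + L)^{-1}\bigr)(\alpha I + L),
\end{equation*}
where the second factor is bijective by $m$-accretivity of $L$ with $\|(\alpha I + L)^{-1}\| \leq 1/\alpha$, and the first is invertible via Neumann series for $\alpha > \lambda \geq \|S\|$; the Lumer--Phillips theorem then yields $m$-accretivity of $T$.

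For (iii) $\Rightarrow$ (ii), I would invoke Corollary~\ref{cor:signed_bij} with the shifted pair $(A_0, \widetilde A_0) := (L_0 + I, -L_0 + I)$, which is itself a joint pair of abstract Friedrichs operators (with $\lambda = \mu = 1$) and trivially satisfies $(A_0 - \widetilde A_0)^* = (2L_0)^* = (T_0 - \widetilde T_0)^*$. The corollary then yields that $\widetilde A_0^*|_\lV = (I + L_1)|_\lV = I + L$ is a bijective realisation on $\lV$; combined with accretivity of $L$ from Lemma~\ref{lem:accretive} (applicable since $\lV \subseteq \lW^+$), this gives $m$-accretivity.

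The bulk of the work lies in (i) $\Rightarrow$ (iii). Since $T$ is $m$-accretive, it is closed and accretive, so $\lV := \dom T$ is closed in $\lW$, contains $\lW_0$ (as $T_0 \subseteq T$ with $T$ closed), and is contained in $\lW^+$ by Lemma~\ref{lem:accretive}. To obtain the other half of (V1), I would use the standard fact that $T^*$ is also $m$-accretive, in particular accretive. Theorem~\ref{thm:abstractFO-prop}(xi) identifies $\dom T^* = \lV^{[\perp]}$, and since $(\widetilde T_0, T_0)$ is again a joint pair of abstract Friedrichs operators but with boundary form $-\iscp{\cdot}{\cdot}$ (so $\lW^+$ and $\lW^-$ swap roles), Lemma~\ref{lem:accretive} applied to this swapped pair forces $\lV^{[\perp]} \subseteq \lW^-$. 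Closedness of $\lV$ together with $\lV \supseteq \lW_0$ yields $\lV^{[\perp][\perp]} = \lV$ via Theorem~\ref{thm:abstractFO-prop}(vi), completing (V2); then Theorem~\ref{thm:abstractFO-prop}(xii) upgrades the (V)-conditions to bijectivity with signed boundary map.

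The most delicate point I expect is the symmetric application of Lemma~\ref{lem:accretive} in this last step: one must carefully track how the decomposition of Theorem~\ref{thm:abstractFO_L0+S} transforms under the swap $(T_0, \widetilde T_0) \leftrightarrow (\widetilde T_0, T_0)$ (the skew-symmetric part $L_0$ flips sign while $S$ is preserved) and verify via \eqref{eq:iscp_L1} that the sign flip of $\iscp{\cdot}{\cdot}$ propagates correctly, so that the accretivity of $T^*$ really produces $\lV^{[\perp]} \subseteq \lW^-$ in the original sign convention.
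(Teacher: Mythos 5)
Your proof is correct, but it takes a genuinely different route from the paper's. The paper handles all three equivalences through Lemma \ref{lem:accretive} together with the fact that, for densely defined operators, $m$-accretivity coincides with maximal accretivity: (i)$\Leftrightarrow$(ii) because $\dom T=\dom L$ and accretivity transfers between $T$ and $L$, so maximality does too; (i)$\Rightarrow$(iii) by a maximality argument --- if $\dom L$ were not maximal non-negative, a strictly larger non-negative subspace $\lV$ would make $L_1|_{\lV}$ a proper accretive extension of $L$, contradicting maximal accretivity --- followed by Theorem \ref{thm:v=x} and Theorem \ref{thm:abstractFO-prop}(xii); and (iii)$\Rightarrow$(i) because maximal non-negativity of $\dom T$ (Theorem \ref{thm:v=x}) plus Lemma \ref{lem:accretive} gives maximal accretivity directly, so no range condition is ever checked by hand. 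Your cyclic argument instead verifies surjectivity explicitly: (iii)$\Rightarrow$(ii) via Corollary \ref{cor:signed_bij} applied to the pair $(L_0+I,-L_0+I)$ (a legitimate choice, since $A_0-\widetilde{A}_0=2L_0=T_0-\widetilde{T}_0$ and the boundary form depends only on $L_1$ by \eqref{eq:iscp_L1}), (ii)$\Rightarrow$(i) via the Neumann-series factorisation of $\alpha I+T$ for $\alpha>\lambda\geq\|S\|$, and (i)$\Rightarrow$(iii) via $m$-accretivity of $T^*$ combined with Theorem \ref{thm:abstractFO-prop}(xi) and the sign flip of the boundary form under the swap $(T_0,\widetilde{T}_0)\leftrightarrow(\widetilde{T}_0,T_0)$, which is exactly the content of Remark \ref{rem:tilde-T}. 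Both arguments are sound. The paper's is shorter and stays entirely at the level of the indefinite inner product; yours avoids invoking the maximal-accretive/$m$-accretive equivalence at the price of two additional standard inputs (the adjoint of an $m$-accretive operator is $m$-accretive, and bounded perturbations preserve $m$-accretivity), and has the merit of exhibiting the surjectivity of $I+L$ in (iii)$\Rightarrow$(ii) constructively rather than through an abstract maximality principle.
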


\begin{proof}
	 For the equivalence between (i) and (ii), from Lemma \ref{lem:accretive} we have that $T$ is accretive if and only if $L$ is accretive.  Since $\dom T=\dom L$ by definition, it is easy to see that if one of them is maximal accretive then so is the other, which concludes this equivalence.
	 
    Let us now prove that (i) implies (iii): if $T$ (and thus also L) is $m-$accretive, then from Lemma \ref{lem:accretive} we have $\dom T=\dom L\subseteq \lW^+$. By Theorem \ref{thm:v=x} and Theorem \ref{thm:abstractFO-prop}(xii) it is enough to prove that $\dom L$ is maximal non-negative. If this is not the case then there exists a nonnegative subspace $\lV \subseteq \lW^+$ that properly contains $\dom L$.
    But then $L_1|_{\lV}$ is again accretive by Lemma \ref{lem:accretive}, which contradicts $m-$accretivity of $L$.
    
    For the converse, if $T$ is a bijective realisation with signed boundary map then for any $u\in \dom T$, it holds $\iscp{u}{u}\geq 0$, which, by Lemma \ref{lem:accretive}, implies that $T$ is accretive. Since, by Theorem \ref{thm:v=x}, $\dom T$ is maximal non-negative in $\lW$, due to Lemma \ref{lem:accretive} it follows that $T$ is maximal accretive.   
    
\end{proof}

\begin{remark}
	Theorem \ref{thm:diss-02} provides an equivalence between (V)-boundary conditions and maximal accretive realisations of abstract Friedrichs operators. In the context of equivalence between (V)- and (X)-boundary conditions (see Theorem \ref{thm:v=x}) we can interpret this equivalence as follows: $T$ is a maximal accretive realisation of an abstract Friedrichs operator if and only if $\dom T$ is maximal nonnegative subspace of $\lW$. We shall explore these connections more precisely in the next section.
\end{remark}

\begin{remark}
    Recall that (by Theorem \ref{thm:classif}), point (iii) of the previous Theorem is equivalent to the fact that there exists a linear operator $U:(\ker\widetilde{T}_1,\iscp{\cdot}{\cdot})\to (\ker T_1, -\iscp{\cdot}{\cdot})$ such that $\|U\|\leq 1$ and $\dom T = \lW_0\dotplus \{\tilde\nu + U\tilde\nu:\tilde\nu\in\ker\widetilde{T}_1 \}$. Hence, all $m-$accertive realisations of $T_0$ can be parameterised by such contractive mappings $U$ between the kernels.
\end{remark}

Now we state the main result of this section which is now a direct consequence of the Lumer-Phillips theorem (see e.g.~\cite[Chapter II, Theorem 3.15]{Engel2}).
\begin{corollary}\label{cor:diss2}
     If $T$ is a realisation of $T_0$, then the following assertions are equivalent:
    \begin{itemize}
        \item[\rm{(i)}] $T$ is a bijective realisation with signed boundary map, i.e.~$\dom T$ satisfies {\rm(V)}- (equivalently {\rm(X)}-) boundary conditions;
        \item[\rm{(ii)}] $-T$ is a generator of a contractive $C_0-$semigroup;
        \item[\rm{(iii)}] $-L$ is a generator of a contractive $C_0-$semigroup.
    \end{itemize}
\end{corollary}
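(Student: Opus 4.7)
The plan is to obtain this corollary as an almost immediate combination of Theorem \ref{thm:diss-02} with the classical Lumer--Phillips theorem, so there is essentially no independent content to prove; the task is to assemble the pieces and verify the hypotheses of Lumer--Phillips in our setting.

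First, I would recall that by Theorem \ref{thm:diss-02}, statement (i) is equivalent to the conjunction ``$T$ is $m$-accretive and $L$ is $m$-accretive''; in particular (i) is equivalent to each of the two $m$-accretivity conditions individually. So it suffices to prove separately that
\[
T \text{ is } m\text{-accretive} \iff -T \text{ generates a contractive } C_0\text{-semigroup},
\]
and the analogous equivalence for $L$ in place of $T$. Both of these are exactly the content of the Lumer--Phillips theorem, stated e.g.\ as \cite[Chapter II, Theorem 3.15]{Engel2}: for a densely defined linear operator $A$ on a Hilbert space, $-A$ generates a contractive $C_0$-semigroup if and only if $A$ is $m$-accretive (in the complex Hilbert space setting, one uses $\Re\scp{Au}{u}\geq 0$ as the accretivity condition, which matches our definition).

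The one small bookkeeping point is that Lumer--Phillips requires the operator to be densely defined. For $T$ this is automatic because every realisation $T$ of $T_0$ satisfies $\dom T\supseteq\dom T_0=\lD$, which is dense in $\lH$ by (T1) (this is also stated explicitly in the remark after Theorem \ref{thm:abstractFO-prop}). For $L$ we use that $\dom L=\dom T$, which is the same dense subspace, so $L$ is densely defined as well. Chaining the two equivalences through $m$-accretivity yields (i)$\Leftrightarrow$(ii) and (i)$\Leftrightarrow$(iii), and hence the full equivalence (i)$\Leftrightarrow$(ii)$\Leftrightarrow$(iii).

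Since Theorem \ref{thm:diss-02} already did the substantive work of translating between (V)-boundary conditions and $m$-accretivity for both $T$ and its skew-symmetric part $L$, there is no real obstacle here; the only thing one has to be slightly careful about is not to accidentally invoke the semigroup equivalence for $T$ and $L$ separately and claim them independently of (i) without noting that they funnel through the common $m$-accretivity statement of Theorem \ref{thm:diss-02}. I would therefore phrase the proof as a one-line diagram: (ii)$\Leftrightarrow$($T$ is $m$-accretive)$\Leftrightarrow$(i)$\Leftrightarrow$($L$ is $m$-accretive)$\Leftrightarrow$(iii), citing Lumer--Phillips for the outer equivalences and Theorem \ref{thm:diss-02} for the inner two.
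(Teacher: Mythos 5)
Your proof is correct and follows exactly the route the paper takes: the corollary is obtained by combining Theorem \ref{thm:diss-02} with the Lumer--Phillips theorem applied to $T$ and to $L$, the only extra bookkeeping being that both operators are densely defined. The paper states this in a single line, so your more explicit chaining of the equivalences through $m$-accretivity is just a fleshed-out version of the same argument.
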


In \cite[Theorem 2]{BEmjom}, the implication (i)$\implies$(ii) has been proved.  Here in Corollary \ref{cor:diss2}, we show that the converse is also true, i.e.~the bijective realisations which give rise to the generators of contractive $C_0-$semigroups, are precisely the ones with signed boundary maps.

It is natural to ask whether weakening the assumptions on the realisation---such as considering all bijective realisations (not necessarily with a signed boundary map)---still ensures that the operator generates a strongly continuous semigroup, even if it is not contractive.
However, the answer is negative. More precisely, a bijective realisation without a signed boundary map may generate a (non-contractive) $C_0-$semigroup, but it may also fail to be a generator. This can already be observed in the following one-dimensional example.

\begin{example}\label{ex:ex-semigroup}
   Consider the first-order differential operator $L_0u=\displaystyle\frac{du}{dx}$ on $\lH=L^2((0,1);\R)$ (for simplicity, we consider only the real setting), with $\dom L_0= C_c^{\infty}((0,1))$. The minimal space and the graph space are $\lW_0=H^1_0((0,1))$ and $\lW = H^1((0,1))$, respectively, while the boundary operator $D$ and the boundary form $\iscp{\cdot}{\cdot}$ are given by
    \begin{align*}
        (\forall u,v\in \lW) \qquad 
            \dup{\lW'}{Du}{v}{\lW}=\iscp{u}{v} 
            = u(1)v(1)-u(0)v(0)\;.
    \end{align*}
      Note that an evaluation of a function from $H^1((0,1))$ at a point has a meaning due to embedding of $H^1((0,1))$ in the space of continuous functions. The operator $ T_0:=L_0+\mathbbm{1}$ is an abstract Friedrichs operator by Theorem \ref{thm:abstractFO_L0+S} (see also \cite[Example 1]{ABmn}) and,
      for $\alpha\in\R\cup\{\infty\}$, let us consider closed realisations $T^\alpha$, $T_0\subseteq T^\alpha\subseteq T_1$ (here we continue to use the notation introduced in Section \ref{sec:abstractFO}), where
    \begin{align*}
        \dom T^\alpha &= \bigl\{u\in\lW : u(1)=\alpha u(0)\bigr\} \;, \quad \alpha\in\R\,, \\
        \dom T^\infty &= \bigl\{u\in\lW : u(0)=0\bigr\} \;.
    \end{align*}
    Of course, we have $T^\alpha=L^\alpha+\mathbbm{1} = \displaystyle\frac{d}{dx}+\mathbbm{1}$, where 
    $L^\alpha=(-L_0)^*|_{\dom T^\alpha}$, and the derivative is taken in the weak sense.
    
    By \cite[Example 3.6]{ES25}, we know that all $T^\alpha$
    except for one particular value, $\alpha=e^{-1}\in (0,1)$, are bijective realisations. 
    Moreover, it is also shown that bijective realisations with 
    signed boundary map correspond to $\alpha\not\in (-1,1)$.

    Therefore, by Theorem \ref{thm:diss-02}, we know that, both
    $T^\alpha$ and $L^\alpha$ are $m-$accretive if and only if $\alpha\in\bigl(\R\cup\{\infty\}\bigr)\setminus (-1,1)$. 
    
    Our aim is to investigate whether $T^\alpha$ and $L^\alpha$
    generate a $C_0-$semigroup in the yet undetermined case when $\alpha\in (-1,1)$.
    Since $T^\alpha$ is merely a bounded perturbation of $L^\alpha$, and vice versa, it suffices to study 
    only $L^\alpha$ (cf. \cite[Chapter III, Theorem 1.3]{Engel2}). 
    Using this argument we can also see that the value 
    $\alpha=e^{-1}$ should not be of a particular importance 
    since it depends on the choice of bounded perturbation 
    (see \cite[Example 3.6]{ES25}),
    while for the property of being a generator of 
    $C_0-$semigroup this choice is irrelevant. 

    If $L^\alpha$ generates a $C_0-$semigroup $S^\alpha(t)$,
    then for any $u_0\in \dom L^\alpha$ the abstract Cauchy 
    problem 
    \begin{align*}
        u' = L^\alpha u \;, \quad   u(0) = u_0 \;,
    \end{align*}
    has a unique solution $S(t)u_0$. 
    Since the above differential equation corresponds to 
    a first-order transport partial differential equation,
    it is easy to see that the only choice for $S(t)$ is 
    to be a translation semigroup
    \begin{equation*}
        \bigl(S(t)u_0\bigr)(x) = u_0(x-t)
    \end{equation*}
    (cf.~\cite[Chapter I, Section 3]{Engel2}). In order to
    give a meaning to the formula above, we need to extend 
    $u_0$ to the negative part of the real line. 
    This is done by utilising the property $S(t)u_0\in \dom 
    L^\alpha$ (see e.g.~\cite[Chapter II, Lemma 1.3]{Engel2}).
    Hence, we get
    $$
    \alpha u_0(-t) = u_0(1-t) \,, \quad t\geq 0\,.
    $$
    
    For $\alpha=0$ the above property implies that 
    $u_0\equiv 0$, implying that $L^0$ cannot be a 
    generator (the abstract problem has a solution only for 
    $u_0\equiv 0$). 

    On the other hand, for $\alpha\in (-1,1)\setminus\{0\}$
    we get that $S(t)u_0$ is well-defined for any $u_0\in\dom L^\alpha$. In particular, $L^\alpha$ is a generator 
    of a $C_0-$semigroup. Moreover, one easily gets
    $$
    \bigl(S(n)u_0\bigr)(x) = \alpha^{-n} u_0(x) \,, \quad 
        x\in [0,1], \, n\in\N \,.
    $$
    Since $0\not=|\alpha|<1$, this provides evidence that
    these semigroups are neither contractive nor uniformly 
    bounded. 

    To conclude, in this example one can see that there is a 
    realisation of $L_0$, which corresponds to a bijective realisation of $T_0$, that does not generate a $C_0-$semigroup, while there are also realisations that 
    generate $C_0-$semigroups that are not uniformly bounded.

    Let us strengthen the above argument slightly by
    demonstrating in an alternative way that the operator 
    $L^0$ (i.e. for $\alpha=0$) does not generate a $C_0-$semigroup: by direct computation, the resolvent operator $\mR(\lambda, L^0)=(\lambda\mathbbm{1}-L^0)^{-1}$ 
    can be characterised as follows. For any $\lambda>0$ and $f\in L^2((0,1))$,
    we have
\begin{align*}
    \mR(\lambda, L^0)f(x) =  \int_x^1 e^{\lambda (x-y)}f(y) dy\;.
\end{align*}
For constant function $f\equiv 1$, the above reads
\begin{align*}
    \mR(\lambda, L^0)f(x) = \frac{1}{\lambda} - \frac{1}{\lambda e^\lambda} e^{\lambda x}\;.
\end{align*}
Thus, we have
\begin{align*}
    \|\mR(\lambda,L^0)\|^2\geq \|\mR(\lambda, L^0) f\|^2 = \frac{3}{2\lambda}\Bigl| 1- \frac{2}{3\lambda}-\frac{4}{3 e^\lambda} + \frac{1}{3 e^{2\lambda}}\Bigr| \,,
\end{align*}
implying that $\|\mR(\lambda,L^0)\|$ is of the order
$O(\frac{1}{\sqrt{\lambda}})$ for large $\lambda>0$.
Therefore, the assumption of the Hille-Yosida theorem 
\cite[Chapter I, Theorem 5.3]{Pazy} are not fulfilled, 
and thus $L^0$ (and consequently $T^0$) is not 
a generator of a $C_0-$semigroup.    
\end{example}

 \begin{remark}\label{rem:tilde-T}
 	   Analogous results to those of Lemma \ref{lem:accretive}, Theorem \ref{thm:diss-02} and Corollary \ref{cor:diss2} hold for realisations of $\widetilde{T}_0$, due to the symmetric role of $T_0$ and $\widetilde{T}_0$ in the theory of Friedrichs systems. For example, statements of Lemma \ref{lem:accretive} hold true with $T_i$ replaced with $\widetilde{T}_i$ (with $i$ being $0, 1$ or void), $L$ replaced with corresponding $\widetilde{L}:=-L_1|_{\dom \widetilde{T}}$ and $\lW^+$ with $\lW^-$. In particular,  $T_1|{_\lV}$ is $m-$accretive if and only if $\widetilde{T}_1|_{\lV^{[\perp]}}$ is $m-$accretive. This can also be seen from the semigroup theory of adjoint operators (\cite[Chapter I, Proposition 1.13]{Engel2}) and the fact that bijective realisations of $T_0$ and $\widetilde{T}_0$ with signed boundary map are mutually adjoint to each other (see Theorem \ref{thm:abstractFO-prop}.xi)). 
\end{remark}  

\begin{remark}
    A specific case of (V)-boundary condition occurs when $\lV^{[\perp]} =\lV$, i.e.~when $\dom T = \dom T^*$. It is easy to verify that this is equivalent to operator $L$ being skew-selfadjoint, which, by \emph{Stone's Theorem} \cite[Chapter II, Theorem 3.24]{Engel2}, is equivalent to the fact that $L$ generates a unitary $C_0$-group. However, it is worth noting that the operator $-T$ does not generate a unitray group, even though $-T$ generates a contractive $C_0-$semigroup. Indeed, if it does, then by Stone's theorem $T$ has to be skew-selfadjoint, i.e.~$T^*=-T$, which would imply $S=0$ in \eqref{eq:abstractFO_L0+S}, contradicting the fact that $S$ is strictly positive. The reason is that the operator $-T=-L-S$ is not $m-$accretive, which means the semigroup generated by $T$ is not contractive, due to the Lumer-Phillips theorem \cite[Chapter II, Theorem 3.15]{Engel2}. However, the $C_0-$semigroup $((P(t))_{t\geq 0}$ generated by $T$ has the following bound (see \cite[Chapter III, Theorem 1.3]{Engel2})
\begin{align*}
    \|P(t)\|\leq e^{\|S\|t}\qquad t\geq 0\;.
\end{align*}

\end{remark}

Let us conclude this section with a remark on skew-symmetric operators.

\begin{remark}
    The extension theory for skew-symmetric operators is well-studied. The von Neumann approach can be found in \cite{PT24} and the approaches involving the concept of boundary systems (or boundary quadruples) can be found in \cite{ACE23,Trostorff23, WW20}. These extension results, classifying all $m-$accretive extensions of skew-symmetric operators, combined with Theorem \ref{thm:diss-02}, provide another way to classify the boundary conditions of abstract Friedrichs operators of interest. Moreover, \cite{Trostorff23, WW20} also include the nonlinear $m-$accretive extensions of skew-symmetric operators, which leads to the nonlinear $m-$accretive extensions of abstract Friedrichs operators. 
\end{remark}

\section{Interpretation of \texorpdfstring{$m$-accretive}{m-accretive} extensions in terms of other boundary conditions}

In the previous section we proved the equivalence between $m-$accretive extensions and the realisations with signed boundary maps for abstract Friedrichs operators. In this section, we investigate and compare (V) and (X)-boundary conditions with $m-$accretivity in more details.

\begin{remark}
\textbf{(V)-boundary conditions}  are also known as the \emph{cone formalism}. The two parts (V1) and (V2) reflect on accretivity and maximality respectively. Indeed, to verify this interpretation of (V1) condition, let $T$ and $\widetilde{T}$ be realisations (extensions) of $T_0$ and $\widetilde{T}_0$, respectively. If we denote $\lV:=\dom T$, $\widetilde{\lV} :=\dom\widetilde{T}$, then by Lemma \ref{lem:accretive} (see also Remark \ref{rem:tilde-T}) the following assertions are equivalent:
     \begin{itemize}
         \item[(i)] $\lV$ and $\widetilde{\lV}$ satisfy {\rm{(V1)}}-condition;
         \item[(ii)] $T$ and $\widetilde{T}$ are accretive.
     \end{itemize} 
Similarly, having that $T$ and $\widetilde{T}$ are accretive, the maximal accretivity (of both $T$ and $\widetilde{T}$) is equivalent to (V2) condition. Indeed, 
due to Theorem \ref{thm:abstractFO-prop}(xi), (V2)-condition is equivalent to the fact that the operators $(T,\widetilde{T})$ are mutually adjoint, i.e.~
\begin{align*}
    T^*=\widetilde{T} \quad \mathrm{and}\quad \widetilde{T}^*=T\;.
\end{align*}
Since both are accretive, then by \cite[Chapter I, Proposition 1.3]{Engel2}, both are maximal accretive. The converse statement has already been proven in Theorem \ref{thm:diss-02} (see also Remark \ref{rem:tilde-T}). 
\end{remark}

\begin{remark}
The maximal accretive extensions of Friedrichs operators are directly comparable to \textbf{(X)-boundary conditions} through Lemma \ref{lem:accretive} and Theorem \ref{thm:diss-02}. To be more specific, (X1) condition is equivalent to the accretivity, while the condition (X2), which is the maximality condition in the context of non-negative subspaces,  appears to be equivalent to the maximality in the context of accretive operators. In a way, this provides another equivalence between (X) and (V) boundary conditions via maximal accretivity. 
\end{remark}

\begin{remark}
In the theory of abstract Friedrichs operators, a key challenge is to establish the connection between (V) (or (X)) boundary conditions and the abstract version of Friedrichs' conditions, namely, the \textbf{(M)-boundary conditions}. Naturally, the closer we get to understanding these connections, the easier it becomes to relate maximal accretivity to (M)-boundary conditions. Although there are existing results on the equivalence between (M) and (V) boundary conditions (see Theorem \ref{lem:m-implies-v} and its constructive version in Theorem \ref{thm:ABcpde-thm08} below, taken from \cite{ABcpde}), in the next section we provide a refined result in this direction. 
The equivalence between maximal accretivity and (V)-boundary conditions allows us to directly relate maximal accretivity to (M)-boundary conditions. Indeed, using theorems \ref{lem:m-implies-v} and \ref{thm:diss-02}, we can easily conclude that if an operator $M:\lW\to \lW'$ satisfies (M)-boundary conditions, then $T_1|_{\ker(D-M)}$ and $\widetilde{T}_1|_{\ker(D+M^*)}$ are $m-$accretive. Conversely, if a realisation $T$ of $T_0$ in $m-$accretive, then there exists an operator $M\in \mathcal{L}(\lW;\lW')$ satisfying (M)-conditions such that $\dom T:=\ker(D-M)$ and $\dom T^{[\perp]}=\ker(D+M^*)$.

The difficult part is the construction of such an operator $M$ for a given $m-$accretive realisation $T$, and we shall address it in the next section.
\end{remark}

\section{On equivalence of  (V) and (M) boundary conditions}\label{sec:M=V}

In this section we provide a more concrete proof of the equivalence between (V) and (M) boundary condition for a pair of abstract Friedrichs operators. 

More precisely, the known result \cite{ABcpde} is the following:

\begin{theorem}\label{thm:ABcpde-thm08}
Let $\lW$ be the graph space and $D$ the boundary operator.
\begin{itemize}
  \item[(i)] Suppose that $\lV$ satisfies the {\rm (V)}-boundary conditions.  
  Then there exists a closed subspace $\lW_2 \subseteq \lW^-$ of $\lW$ such that
  \[
     \lW = \lV \dot{+} \lW_2.
  \]
  Additionally, if $\lW_2$ is such a subspace and we set $\lW_1 := \lV \cap \lW_0^\perp$, so that
  $\lW = \lW_0 \dot{+} \lW_1 \dot{+} \lW_2$, with projectors $q_0,q_1,q_2$ associated to this decomposition, then the operator $M \in \mathcal{L}(\lW;\lW')$ defined with
  \[
     M = D\,(q_1 - q_2)
  \]
    satisfies the {\rm (M)}-boundary conditions and $\lV = \ker(D-M)$.  
  \item[(ii)] Conversely, let 
  $M \in \mathcal{L}(\lW;\lW')$ satisfy the {\rm (M)}-boundary conditions and set
  $\lV := \ker(D-M)$.  
  Then
  \[
     \lW_2 := \ker(D+M) \cap \lW_0^\perp
  \]
  is a closed subspace of $\lW$ contained in $\lW^-$, and we have
  \[
     \lW = \lV \dot{+} \lW_2.
  \]
\end{itemize}
\end{theorem}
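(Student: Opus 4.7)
The plan is to prove each direction by explicit construction, with most of the work in part (i); hermitian symmetry of the boundary form and the identification $\lW_0 = \ker D$ carry the key computations.

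For part (i), I would first establish existence of $\lW_2$. By Remark \ref{rem:Vcond} and Theorem \ref{thm:abstractFO-prop}.xiii), $\lW = \lV \dotplus \ker T_1$, and since $\ker T_1 \subseteq \lW^-$ by Theorem \ref{thm:abstractFO-prop}.ix), the choice $\lW_2 := \ker T_1$ is a closed complement contained in $\lW^-$ (any other closed complement inside $\lW^-$ works equally well). Given such $\lW_2$, set $\lW_1 := \lV \cap \lW_0^\perp$, with the orthogonal complement taken in the graph-norm Hilbert space $\lW$; since $\lW_0$ is closed in $\lW$ and contained in $\lV$, this yields $\lV = \lW_0 \dotplus \lW_1$ and hence $\lW = \lW_0 \dotplus \lW_1 \dotplus \lW_2$ with all three summands closed. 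The associated projectors $q_0, q_1, q_2$ are therefore bounded on $\lW$, so $M := D(q_1 - q_2) \in \mathcal{L}(\lW;\lW')$.

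To verify (M1), I would compute, for $u \in \lW$,
$$\dup{\lW'}{Mu}{u}{\lW} = \iscp{q_1 u - q_2 u}{\,q_0 u + q_1 u + q_2 u}.$$
Terms involving $q_0 u$ vanish because $\lW_0 = \lW^{[\perp]}$ (Theorem \ref{thm:abstractFO-prop}.vi)), and the cross terms $\iscp{q_1 u}{q_2 u} - \iscp{q_2 u}{q_1 u}$ are purely imaginary by hermitian symmetry of $\iscp{\cdot}{\cdot}$, leaving
$$\Re \dup{\lW'}{Mu}{u}{\lW} = \iscp{q_1 u}{q_1 u} - \iscp{q_2 u}{q_2 u} \geq 0,$$
since $\lW_1 \subseteq \lV \subseteq \lW^+$ and $\lW_2 \subseteq \lW^-$. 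For (M2) and the identity $\lV = \ker(D - M)$, I would expand $(D \mp M)u$ using $u = q_0 u + q_1 u + q_2 u$ together with $Dq_0 u = 0$, obtaining $(D - M)u = 2D q_2 u$ and $(D + M)u = 2D q_1 u$; since $\lW_i \cap \lW_0 = \{0\}$ for $i = 1, 2$, this yields $\ker(D - M) = \lW_0 \dotplus \lW_1 = \lV$ and $\ker(D + M) = \lW_0 \dotplus \lW_2$, whose sum is $\lW$.

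For part (ii), closedness of $\lW_2 := \ker(D+M) \cap \lW_0^\perp$ in $\lW$ is immediate from continuity of $D + M$. For $u \in \lW_2$ we have $Du = -Mu$, hence $\iscp{u}{u} = \dup{\lW'}{Du}{u}{\lW} = -\dup{\lW'}{Mu}{u}{\lW}$; since $\iscp{u}{u} \in \R$ by hermitian symmetry, (M1) forces $\iscp{u}{u} \leq 0$, giving $\lW_2 \subseteq \lW^-$. By Lemma \ref{EGC:lem4.1}, $\lW_0 = \ker D = \ker M \subseteq \ker(D + M)$, whence $\ker(D + M) = \lW_0 \dotplus \lW_2$; combining with (M2) and $\lW_0 \subseteq \lV$ yields $\lW = \lV + \lW_2$, and the sum is direct because $u \in \lV \cap \lW_2$ forces $Du = Mu = 0$, i.e.~$u \in \lW_0 \cap \lW_0^\perp = \{0\}$. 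The main obstacle is not the algebra, which is routine once the decomposition is in place, but the bookkeeping: confirming that closed direct-sum decompositions of $\lW$ produce bounded projectors (so that $M \in \mathcal{L}(\lW;\lW')$), invoking hermitian symmetry of $\iscp{\cdot}{\cdot}$ at the right moment so that indefinite cross terms drop out, and using Lemma \ref{EGC:lem4.1} in part (ii) to identify $\ker D$ with $\ker M$, which is precisely what permits isolating the $\lW_0^\perp$-part of $\ker(D + M)$ without losing the decomposition $\lW = \lV \dotplus \lW_2$.
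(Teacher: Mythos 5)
Your proof is correct, but it is worth noting that the paper itself does not reprove this statement: it quotes it from \cite{ABcpde} (where the existence of $\lW_2$ in part (i) is obtained via Kre\u\i n-space arguments) and only proves the special case $\lW_2=\ker T_1$ as Theorem \ref{tm:V-implies-M}, with the general identity $\ker(D+M)=\lW_0+\lW_2$ relegated to Remark \ref{rem:M-prop}(i). Your route is a self-contained elementary proof of the full statement. For existence you use exactly the shortcut the paper advocates ($\lW_2=\ker T_1$ via Theorem \ref{thm:abstractFO-prop}.xiii) and ix)), bypassing the Kre\u\i n-space machinery of \cite{ABcpde}. Your verification of (M1) is the same computation as in the paper's proof of Theorem \ref{tm:V-implies-M} (drop the $q_0$-terms via $\lW_0=\lW^{[\perp]}$, kill the cross terms by hermitian symmetry), but carried out for an arbitrary closed $\lW_2\subseteq\lW^-$. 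For (M2) you diverge slightly: the paper reduces to showing $\ker T_1\subseteq\ker(D+M)$ and invokes $\lW=\lV\dotplus\ker T_1$, whereas you compute $\ker(D\mp M)=2Dq_{2,1}$ and identify both kernels exactly, which is marginally more informative and is what the paper only records afterwards in Remark \ref{rem:M-prop}(i). Part (ii), which the paper leaves entirely to \cite{ABcpde}, you prove correctly and economically using Lemma \ref{EGC:lem4.1} to get $\lW_0=\ker D=\ker M$ and the orthogonal splitting $\ker(D+M)=\lW_0\oplus\lW_2$; the directness argument ($u\in\lV\cap\lW_2$ forces $Du=Mu=0$, hence $u\in\lW_0\cap\lW_0^\perp$) is clean. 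The only point you pass over quickly is the boundedness of $q_0,q_1,q_2$: this follows from the closed graph theorem applied first to $\lW=\lV\dotplus\lW_2$ (both summands closed) and then to the orthogonal decomposition $\lV=\lW_0\oplus\lW_1$ inside $\lV$; stating this two-step reduction would make the claim airtight.
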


\begin{remark}\label{rem:cons-M}
    Note that the construction above of $M$ from part (i) can be simplified: if $p_1,p_2$ are corresponding direct projectors with respect to the decomposition $\lW = \lV\;\dot{+}\;\lW_2$, i.e.~$p_1=q_0+q_1$ and $p_2=q_2$, then the operator $M$ can be represented as $M= D(p_1-p_2)=D(1-2p_2)$, due to $Dq_0=0$.
\end{remark}
\begin{remark}
    The existence of a closed subspace $\lW_2\subseteq \lW^-$ of $\lW$ such that $\lV\dot{+}\lW_2=\lW$ was proved in \cite[Theorem 9]{ABcpde} via rather non-trivial arguments of Kre\u\i n space theory. However,  from Remark \ref{rem:Vcond} we have $\lV\dot{+}\ker T_1=\lW$, clearly $\ker T_1$ is closed in $\lW$ and by Theorem \ref{thm:abstractFO-prop} part (ix) it is also non-positive in $\lW$. Thus, we can choose $\lW_2$ to be $\ker T_1$ (for every $\lV$ satisfying {\rm{(V)}}- boundary conditions). We state and prove this result explicitly in the following theorem and we also discuss the multiplicity of the subspaces $\lW_1$ and $\lW_2$ later.
\end{remark}

\begin{theorem}\label{tm:V-implies-M}  Let $\lV$ be a subspace satisfying {\rm{(V)}}-boundary conditions. If $p_1,p_2$ are projectors corresponding to the decomposition $\lW = \lV\;\dot{+}\;\ker T_1$, then the operator $M:= D(1-2p_2)$ satisfies {\rm{(M)}}-boundary conditions and $\lV=\ker(D-M)$.

\end{theorem}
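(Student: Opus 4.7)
The plan is to leverage the fact, noted in Remark \ref{rem:Vcond}, that any $\lV$ satisfying (V)-boundary conditions admits the direct decomposition $\lW = \lV \dotplus \ker T_1$. Both summands are closed in $\lW$ (the first by Remark \ref{rem:Vcond}, the second as the kernel of the bounded map $T_1:(\lW,\|\cdot\|_{T_1})\to\lH$), so the associated direct projectors $p_1,p_2$ are bounded on $\lW$ by the closed graph theorem; combined with $D\in\mathcal{L}(\lW;\lW')$ (Theorem \ref{thm:abstractFO-prop}(vi)), this gives $M\in\mathcal{L}(\lW;\lW')$. Using $p_1+p_2=I$, I would rewrite $M=D(p_1-p_2)$, which yields the clean identities $D-M=2Dp_2$ and $D+M=2Dp_1$ that drive the rest of the argument.

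From these identities, the kernel computations reduce to tracking where $p_1 u$ or $p_2 u$ falls in $\ker D=\lW_0$. For $\ker(D-M)=\lV$, I combine $p_2 u\in\ker T_1$ with $\ker T_1\cap\lW_0=\{0\}$ (from the decomposition \eqref{eq:decomposition}) to conclude $p_2 u=0$, hence $u\in\lV$; the reverse inclusion is immediate. For condition (M2), using $\lW_0\subseteq\lV$, I obtain $\ker(D+M)=\{u\in\lW:p_1 u\in\lW_0\}\supseteq \ker T_1$, so that $\lV+\ker(D+M)\supseteq \lV\dotplus\ker T_1=\lW$.

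To verify (M1), I would write an arbitrary $u\in\lW$ as $u=v_1+v_2$ with $v_1\in\lV$, $v_2\in\ker T_1$, and expand $\dup{\lW'}{Mu}{u}{\lW}=\iscp{(p_1-p_2)u}{u}$. The Hermitian symmetry of $\iscp{\cdot}{\cdot}$, evident from \eqref{eq:iscp_L1}, forces the cross terms $\iscp{v_1}{v_2}-\iscp{v_2}{v_1}$ to be purely imaginary, so they drop out of $\Re$, leaving $\iscp{v_1}{v_1}-\iscp{v_2}{v_2}\geq 0$ thanks to $\lV\subseteq\lW^+$ and $\ker T_1\subseteq\lW^-$ (Theorem \ref{thm:abstractFO-prop}(ix)). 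I do not foresee a serious obstacle: the proof reduces to straightforward algebra once $\ker T_1$ is recognised as the closed non-positive complement to $\lV$ supplied directly by the structure theorem for abstract Friedrichs operators, which is substantially simpler than the general Kre\u{\i}n-space construction used in \cite{ABcpde} for Theorem \ref{thm:ABcpde-thm08}.
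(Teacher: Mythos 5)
Your proposal is correct and follows essentially the same route as the paper's own proof: the identities $D-M=2Dp_2$ and $D+M=2Dp_1$, the use of $\ker T_1\cap\lW_0=\{0\}$ for $\ker(D-M)=\lV$, the inclusion $\ker T_1\subseteq\ker(D+M)$ combined with $\lW=\lV\dotplus\ker T_1$ for (M2), and the cancellation of the purely imaginary cross terms for (M1). The only addition is your explicit verification that $M\in\mathcal{L}(\lW;\lW')$ via the closed graph theorem, which the paper leaves implicit.
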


\begin{proof}
         Let us first prove that $M$ satisfies (M1)-condition: if $u\in \lW$, then
        \begin{align*}
             \dup{\lW'}{Mu}{u}\lW&=\iscp{(1-2p_2)u}{u}
            =\iscp{(p_1-p_2)u}{(p_1+p_2)u}\;,
        \end{align*}
        where we used $p_1+p_2=\mathbbm{1}$.
        Since, $\iscp{p_1u}{p_2u}-\iscp{p_2u}{p_1u}$ has no real part (by symmetry property of indefinite inner product; see Theorem \ref{thm:abstractFO-prop}(vi)) and $\lV\subseteq \lW^+, \ker T_1\subseteq \lW^-$, we conclude that $\Re\dup{\lW'}{Mu}{u}\lW\geq 0$.
        Hence, $M$ satisfies (M1)-condition.
        The statement $\lV=\ker(D-M)$ easily follows from
        \begin{align*}
         \ker(D-M) = \ker D p_2 = \ker p_2 \cup (\ran p_2 \cap \ker D) = \lV \cup (\ker T_1\cap \lW_0) = \lV\;, 
        \end{align*}
        where we have used the definition of $M$ for the first equality, the second one is trivial, the third one follows from the definition of $p_2$ and $\ker D = \lW_0$ (Theorem \ref{thm:abstractFO-prop}.(vi)), while the last one follows from Theorem \ref{thm:abstractFO-prop}(ix).
        

    To obtain  (M2)-condition, i.e.~$\lW=\ker (D-M)+\ker(D+M)$, it is sufficient (due to the decomposition given in Theorem \ref{thm:abstractFO-prop}(xiii)) to prove that $\ker T_1\subseteq \ker (D+M)$. Since for $u\in \ker T_1$ we have $p_1u=0$, it easily follows,
    \begin{align*}
        (D+M)u=2Dp_1u=0\;,
    \end{align*}
    which completes the proof.
\end{proof}

\begin{remark}\label{rem:M-prop}
Let $\lV$ be a subspace satisfying {\rm{(V)}}-boundary conditions.
\begin{itemize} 
    \item[(i)] In the proof of the previous theorem we showed that $\ker (D-M)=\lV$ and $\ker (D+M)\supseteq \ker T_1$.
    The latter can be improved to $\ker (D+M)=\lW_0+\ker T_1$.
    In fact, the analogous identity holds for the more general construction given in Theorem \ref{thm:ABcpde-thm08}(i),
    namely
    $$
    \ker (D+M)=\lW_0+\lW_2 \,,
    $$
    with $\lW_2$ denoting the space defined in Theorem \ref{thm:ABcpde-thm08}(i).
    
    Indeed, since $\ker (D+M)$ is a subspace, from 
    $$
    \lW_0\subseteq\ker (D+M) \quad 
        \hbox{and} \quad  \lW_2\subseteq\ker (D+M) \,,
    $$
    we obtain $\lW_0+\lW_2\subseteq\ker (D+M)$.
    The first inclusion follows from 
    $\ker D=\ker M=\lW_0$, while the latter is obvious from the identity 
    $D+M = 2Dp_1$. 
    Conversely, using again $D+M = 2Dp_1$, for any $u\in \ker (D+M)$ we have $p_1u\in\lW_0$.
    Hence, writing $u=p_1u+p_2u$, we get that $u\in \lW_0+\lW_2$.

    \item[(ii)] Theorem \ref{tm:V-implies-M} ensures that for any subspace $\mathcal{V}$ satisfying the (V)-boundary conditions, there exists a corresponding operator $M$ satisfying the (M)-boundary conditions such that $\mathcal{V}=\ker(D-M)$, obtained via the construction of Theorem \ref{thm:ABcpde-thm08}(i) with $\mathcal{W}_2=\ker T_1$.
    Moreover, this operator $M$ is unique. Consequently, the assignment 
    $$
    \lV\mapsto M \,,
    $$
    is a well-defined and injective mapping.

    It is, however, not surjective: there exist operators $M$ satisfying the (M)-boundary conditions with $\mathcal{V}=\ker(D-M)$ that do not arise from this construction (see the next point and the examples following this remark).

    \item[(iii)] Let us now reflect on the construction from Theorem \ref{thm:ABcpde-thm08} in more detail: for a fixed $\lV$ satisfying (V)-boundary conditions, let us define
    \begin{align*}
        \mathbf W_\lV &:=\{\lW_2\subseteq \lW: \lW_2\subseteq \lW^- \hbox{ is a closed subspace and }\lV\dot{+}\lW_2=\lW\},\\
        \mathbf M_\lV &:= \{M\in\lL(\lW,\lW') : M \hbox{ satisfies (M)-boundary cond. and } \lV=\ker(D-M)\},
    \end{align*}
    and a mapping $\mathbf W_\lV \rightarrow \mathbf M_\lV$ defined as $\lW_2\mapsto M$ by the construction from part (i) of Theorem \ref{thm:ABcpde-thm08}. Surjectivity of such mapping follows from part (ii) of the theorem, thus all suitable $M$'s can be obtained by this construction. Regarding injectivity, we can only conclude that this mapping is injective modulo $\lW_0$, i.e.~if $M=M'\in \mathbf M_\lV$, then from the part (i) of this remark it follows that $\lW_2 + \lW_0 = \ker(D+M) = \ker(D+M') = \lW_2' + \lW_0$. 
    
    Indeed, this is the strongest conclusion we can make, since $\lW_2 + \lW_0 = \lW_2' + \lW_0$ implies $\ker(D+M) = \ker(D+M')$ (again using part (i)), which means that $M=-D=M'$ on this subspace. Since $M$ and $M'$ coincide on $\lV = \ker(D-M) = \ker(D-M')$ by the similar argument, it follows that they coincide on $\lW$ by (M2)-condition. Thus $\lW_2 + \lW_0 = \lW_2' + \lW_0$ is equivalent to $M=M'$
    
\end{itemize}
\end{remark}


We now illustrate the results and observations of this section with two examples. The first one extends Example \ref{ex:ex-semigroup}, while the second examines a partial differential operator corresponding to the stationary diffusion equation.

\begin{example}
We continue with Example \ref{ex:ex-semigroup}, 
where the spaces $\lH$, $\lW$, $\lW_0$, and the 
operators $T_0$, $T_1$, $T^\alpha$, $D$, are fixed. 

It is easy to get that the kernel of $T_1$ is given by
\begin{align*}
    \ker T_1 = \operatorname{span}\{e^{-x}\} \,,
\end{align*}
cf.~\cite[Example 3.6]{ES25}.

Let us take an arbitrary $\alpha\in\bigl(\R\cup\{\infty\}\bigr)\setminus (-1,1)$. Then $T^\alpha$ is a bijective realisation with 
signed boundary map, i.e.~$\dom T^\alpha$ satisfies 
(V)-boundary conditions. 
Thus, by Theorem \ref{lem:m-implies-v} there exists
an operator $M^\alpha\in\lL(\lW;\lW')$ satisfying 
(M)-boundary conditions. 

We shall first use the construction given in 
Theorem \ref{tm:V-implies-M} to obtain one such $M^\alpha$. 
Let $p_1^\alpha, p_2^\alpha$ be direct projectors corresponding
to the decomposition $\lW=\dom T^\alpha\;\dot{+}\;\ker T_1$.
Then, by Theorem \ref{tm:V-implies-M},
\begin{equation*}
M^\alpha= D(1-2p_2^\alpha) 
\end{equation*}
satisfies (M)-boundary conditions and corresponds to 
$\dom T^\alpha$ in the sense that 
$\ker (D-M^\alpha)=\dom T^\alpha$.
Moreover, by obtaining an explicit formula for $p_2^{\alpha}$
(see \cite[Subsection 5.3]{ES22} for similar computations),
we get:
\begin{equation*}
    \dup{\lW'}{M^\alpha u}{v}{\lW}\,=\, (u(1)-2K_u^\alpha e^{-1})v(1)
    - (u(0)-2K_u^\alpha)v(0)\,,\quad u,v\in\lW\,,
\end{equation*}
where 
$$
K_u^\alpha = \frac{u(1)-\alpha u(0)}{e^{-1}-\alpha }
    \,,
$$
and $K_u^\infty=u(0)$. 

Now, let us consider other operators that satisfy (M)-boundary conditions and correspond to the same space $\dom T^\alpha$. We will use the more general construction provided by Theorem \ref{thm:ABcpde-thm08}. 
Our aim is to find all closed $\lW_2$'s satisfying $\dom T^\alpha\dotplus \lW_2 =\lW$ and $\lW_2\subseteq \lW^-$, which by Theorem \ref{thm:ABcpde-thm08}(i) ensures the classification of all (M)-boundary conditions corresponding to a fixed $\alpha$. Since $\dim \ker T_1=1$, by Theorem \ref{thm:abstractFO-prop}(xiii) it holds $\operatorname{codim}\dom T^\alpha=1$,  implying $\dim \lW_2=1$. By Theorem \ref{thm:abstractFO-prop}(ix), we can write $\lW_2=\lW_2^{a,b}:=\operatorname{span}\{u_0 + ae^{-x}+be^x\}$ for some $u_0\in \lW_0, a,b\in \R$. By Remark \ref{rem:M-prop}(iii), it is sufficient to take $u_0=0$. Since $\lW_2^{a,b}\subseteq\lW^-$ we have that $a\neq 0$ and hence we can parameterise all such $\lW_2$'s by $r:= \frac{b}{a}$.

Let us take $r\in\R$ such that 
$$
|r|\leq  \sqrt{-\frac{\iscp{e^{-x}}{e^{-x}}}{\iscp{e^{x}}{e^{x}}}} = \frac{1}{e}
$$
and define 
$$
\lW_2^r = \operatorname{span}\left\{ e^{-x}+ re^x\right\} \,.
$$
The assumption on the parameter $r$ is precisely to ensure that
$\lW_2^r\subseteq\lW^-$. Note also that $\lW_2^0=\ker T_1$.

Clearly, the decomposition
\begin{align*}
    \lW\,=\,\dom T^\alpha\dotplus \lW_2^r
\end{align*}
holds and it is direct barring the following two exceptional cases: if $\alpha =1$, then $\lW^{e^{-1}}_2\subseteq \dom T^\alpha$ and, if $\alpha =-1$, then $\lW^{-e^{-1}}_2\subseteq \dom T^\alpha$. If the direct projections 
$p_{1}^{\alpha,r}$ and $p_2^{\alpha,r}$ are onto $\dom T^\alpha$ and $\lW_2^r$, respectively, then 
the operator $M^{\alpha,r}=D(1-2p_2^{\alpha,r})$ satisfies 
(M)-boundary conditions.
Analogously as to the case $r=0$ above, we have:
for any $u,v \in \lW$,
\begin{equation}\label{eq:M-1d-all}
    \dup{\lW'}{M^{\alpha,r}u}{v}{\lW}\,=\, 
    (u(1)-2K_{u}^{\alpha,r}(e^{-1}+re)) v(1)- (u(0)-2K_{u}^{\alpha,r}(1+r))v(0)\,,
\end{equation}
where 
$$
K_{u}^{\alpha,r} = \frac{u(1)-\alpha u(0)}{e^{-1}+re-\alpha (1+r)} \,,
$$
and $K_u^{\infty,r}=\frac{1}{1+r}u(0)$.

Note that $M^\alpha$ and $M^{\alpha,0}$ 
coincide, as expected (since $\lW_2^0=\ker T_1$).
Furthermore, by Remark \ref{rem:M-prop}(iii), we see that
\emph{all} operators satisfying (M)-boundary conditions and 
corresponding to $\dom T^\alpha$ are given by 
\eqref{eq:M-1d-all}. Indeed, one simply needs to observe that 
for any other $\lW_2\subseteq\lW^-$ such that 
$\dom T^\alpha\dot{+}\lW_2=\lW$ we have $\lW_2+\lW_0=\lW_2^r+\lW_0$
for some $r$, $r\leq  e^{-1}$. 
\end{example}

\begin{example}
Let $\Omega\subset \mathbb{R}^d$ be an open and bounded set with Lipschitz boundary $\Gamma$. Consider the following second-order partial differential equation
\begin{equation*}
    - \Delta u+u=f\;,
\end{equation*}
which can be written as a system of first-order partial
differential equations as follows:
\begin{equation}\label{eq:SDE-system}
    \left\{
\begin{array}{ll}
      \mp=-\nabla u \\
      \operatorname{div}\mp+u = f \;. \\
\end{array} 
\right.
\end{equation}
Let us define the coefficient matrices
$\mA_k=\ve_k\otimes \ve_{d+1}+\ve_{d+1}\otimes \ve_k \in \mathrm{M}_{d+1}(\R)$ for $k=1,2,...,d$, where $(\ve_1,\ve_2,...,\ve_{d+1})$ is the standard basis for $\R^{d+1}$, and let $\mB$ be the identity matrix of order $d+1$. Then the system \eqref{eq:SDE-system} can be rewritten as $T_0\vu = \vf$, where 
\begin{align*}
    T_0 \vu \;:=\; \sum_{k=1}^d \partial_k(\mA_k \vu) + \mB \vu\,,
\end{align*}
with $\vu = \begin{bmatrix}
    \mp\\ u
\end{bmatrix}$ and $\vf =\begin{bmatrix}
    0\\ f
\end{bmatrix}$. The operator $T_0$ is an abstract Friedrichs operator (see \cite{ABjde, EGC, Soni24}). Here we use our standard notation in which $T_0$ stands for minimal, and $T_1$ for the corresponding maximal operator. The graph space and the minimal space are identified as
\begin{align*}
    \lW & := L^2_{\operatorname{div}}(\Omega;\C^d)\times H^1(\Omega;\C) \\
   \mathrm{and}\quad  \lW_0& := L^2_{\operatorname{div},0}(\Omega;\C^d)\times H^1_0(\Omega;\C)\,,
\end{align*}
respectively. Here the space 
$L^2_{\operatorname{div}}(\Omega;\C^d)$ is defined as the space
of all vector-valued $L^2$ functions whose divergence is 
an (scalar) $L^2$ function as well. Then $L^2_{\operatorname{div},0}(\Omega;\C^d)$ is just the closure of
the space $C^\infty_c(\Omega;\Cr)$ in the norm of 
$L^2_{\operatorname{div}}(\Omega;\C^d)$.

Let  $\mT_0$ and $\mT_{\mnu}$ be the traces of  $H^1(\Omega;\C)$ in $H^\frac{1}{2}(\Gamma;\C)$ and $L^2_{\operatorname{div}}(\Omega;\C^d)$ in $H^{-\frac{1}{2}}(\Gamma;\C)$, respectively. Moreover, the boundary map can be characterised as,
\begin{equation}\label{eq:SDE-boundary}
   \biggl(\forall \,\,\vu := \begin{bmatrix}
        \mp\\u
   \end{bmatrix}, \vv := \begin{bmatrix}
       \mq\\v
   \end{bmatrix} \in \lW \biggr) \qquad \dup{\lW'}{D\vu}{\vv}\lW = \dup{-\frac{1}{2}}{\mT_{\mnu} \mp}{\mT_0 v}{\frac{1}{2}}+\dup{-\frac{1}{2}}{\mT_{\mnu} \mq}{\mT_0 u}{\frac{1}{2}}\,,
\end{equation}
where $\dup{-\frac{1}{2}}{\cdot}{\cdot}{\frac{1}{2}}$ denotes the duality pairing between the space $H^\frac{1}{2}(\Gamma;\C)$ and its dual $H^{-\frac{1}{2}}(\Gamma;\C)$. 
The kernel can be described as
\begin{align*}
    \ker T_1  = \{(\mp, u)^\top \in \lW: \mp = -\nabla u \ \mathrm{and} \ u = -\operatorname{div} \mp\}\,.
\end{align*}
Notice that $(\mp, u)^\top\in\ker T_1$ if and only if 
$-\Delta u + u=0$ and $\mp=-\nabla u$. Hence, $\ker T_1$ can be
parametrised by the value of $u$ on the boundary, 
i.e.~$\mT_0 u$, since the problem $-\Delta u+u=0$ together
with $\mT_0u=g$, for a fixed $g\in H^{\frac{1}{2}}(\Gamma;\C)$, has a unique solution. 

By choosing $\lV= L^2_{\operatorname{div}}(\Omega;\C^d)\times H^1_0(\Omega;\C) = \{(\mp,u)\in \lW: \mT_0 u=0\}$, the homogeneous Dirichlet boundary condition is imposed (we refer to the aforementioned references for these results).

Let us follow the construction from Theorem \ref{tm:V-implies-M}
in order to obtain an operator $M\in\lL(\lW;\lW')$ that
satisfies (M)-boundary conditions and it is associated to 
the chosen $\lV$.
For an arbitrary $u\in H^1(\Omega;\C)$, let $w_u \in H^1(\Omega; \C)$ be a unique solution of 
\begin{equation}\label{eq:hom}
    \left\{
\begin{array}{ll}
      -\Delta w_u +  w_u  =0\quad {\rm{in}}\; \Omega \\
      \mT_0 w_u = \mT_0 u \;. \\
\end{array} 
\right.
\end{equation} 
Then we know that $(-\nabla w_u, w_u)^\top \in \ker T_1$. 
For an arbitrary $\mp\in L^2_{\operatorname{div}}(\Omega;\C^d)$
we have
$$
\begin{bmatrix}\mp\\ u\end{bmatrix} =
    \begin{bmatrix}\mp +\nabla w_u\\ u-w_u\end{bmatrix}
    + \begin{bmatrix}-\nabla w_u\\ w_u\end{bmatrix} \,.
$$
Since the vector-valued functions on the right-hand side
are in $\lV$ and $\ker T_1$, respectively, the decomposition
above defines the direct projectors $p_1$ and $p_2$ from 
Theorem \ref{tm:V-implies-M}. More precisely, 
$p_2(\mp,u)^\top = (-\nabla w_u, w_u)^\top$.
Now, by Theorem \ref{tm:V-implies-M}, 
$M=D(1-2p_2)$ is a desired operator.
Using \eqref{eq:SDE-boundary}, for any $\vu:=(\mp,u)^\top\in\lW$ and $\vv:=(\mq,v)^\top\in\lW$ we get 
\begin{equation}\label{eq:SDE-M}
\begin{aligned}
\dup{\lW'}{M\vu}{\vv}\lW &= \dup{\lW'}{D(\vu-2p_2\vu)}{\vv}\lW \\
&= \dup{-\frac{1}{2}}{\mT_{\mnu} (\mp+2\nabla w_u)}{\mT_0 v}{\frac{1}{2}}+\dup{-\frac{1}{2}}{\mT_{\mnu} \mq}{\mT_0 (u-2w_u)}{\frac{1}{2}} \\
&= \dup{-\frac{1}{2}}{\mT_{\mnu} (\mp+2\nabla w_u)}{\mT_0 v}{\frac{1}{2}}-\dup{-\frac{1}{2}}{\mT_{\mnu} \mq}{\mT_0 u}{\frac{1}{2}} \\
&= \dup{-\frac{1}{2}}{\mT_{\mnu} \mp}{\mT_0 v}{\frac{1}{2}}-\dup{-\frac{1}{2}}{\mT_{\mnu} \mq}{\mT_0 u}{\frac{1}{2}}
+ 2\dup{-\frac{1}{2}}{\mT_{\mnu} \nabla w_u}{\mT_0 v}{\frac{1}{2}}\,,
\end{aligned}
\end{equation}
where in the third equality we have used that $\mT_0 w_u=\mT_0 u$.
The linear map $u\mapsto \mT_\mnu \nabla w_u$ appearing in the formula above can be recognised as the Dirichlet-to-Neumann map, corresponding to the elliptic operator $-\Delta + \mathbbm{1}$. See, for example, \cite{BE15} for recent studies of this object in a broader context.


In \cite[Section 6]{ABjde} other operators $M$ were 
provided using a rather different approach. 
More precisely, for the homogeneous Dirichlet boundary conditions, which 
are considered here, it was proved in \cite{ABjde} that, for any $\alpha\geq 0$, operators from the following family
($(\mp,u)^\top, (\mq,v)^\top\in\lW$)
\begin{equation}\label{M_alpha}
    \dup{\lW'}{M_\alpha(\mp, u)^\top}{(\mq, v)^\top}{\lW} 
    \;=\; \dup{-\frac{1}{2}}{\mT_{\mnu} \mp}{\mT_0 v}{\frac{1}{2}}-\dup{-\frac{1}{2}}{\mT_{\mnu} \mq}{\mT_0 u}{\frac{1}{2}}
    +2\alpha\int_\Gamma \mT_0u\overline{\mT_0v}\; dS \,,
\end{equation}
satisfy (M)-boundary conditions and correspond to $\lV$.


For any $\alpha\geq 0$, one can easily find a suitable subspace $\lW_2^\alpha$
which, following the construction of Theorem \ref{thm:ABcpde-thm08}, leads to the above operator $M_\alpha$: for an arbitrary $u\in H^1(\Omega;\C)$, let $v_u \in H^1(\Omega; \C)$ be a unique weak solution of the Neumann problem
\begin{equation}\label{eq:neumann}
    \left\{
\begin{array}{ll}
      -\Delta v_u +  v_u  =0\quad {\rm{in}}\; \Omega \\
      \mnu \cdot \nabla v_u = \mT_0 u \;. \\
\end{array} 
\right.
\end{equation}
Then we also have that $\Delta v_u \in L^2(\Omega;\C)$, the equation in (\ref{eq:neumann}) holds in $L^2(\Omega;\C)$, while boundary condition can be interpreted as $\mT_\mnu(\nabla v_u) = \mT_0 u$ in $H^{-1/2}(\Gamma;\C)$ \cite[pp. 380-383]{DL}.
Then a possible choice could be
$$
\lW_2^\alpha = \bigl\{(-\alpha \nabla v_u, w_u)^\top : u\in H^1(\Omega;\C) \bigr\} \,.
$$
Indeed, the decomposition
$$
\begin{bmatrix}\mp\\ u\end{bmatrix} =
    \begin{bmatrix}\mp +\alpha\nabla v_u\\ u-w_u\end{bmatrix}
    + \begin{bmatrix}-\alpha\nabla v_u\\ w_u\end{bmatrix} \,.
$$
can be used to verify that the identities $\lW=\lV+\lW_2^\alpha$ and $\ker(D+M_\alpha)=\lW_0+\lW_2^\alpha$ hold. The second equality shows that this $\lW_2^\alpha$ generates $M_\alpha$, given in (\ref{M_alpha}), by the construction from Theorem \ref{thm:ABcpde-thm08} (see also Remark \ref{rem:M-prop}). Since $\alpha\ge 0$, we easily get $\lW_2^\alpha\subseteq \lW^-$, while $\lV\cap\lW_2^\alpha=\{0\}$ follows from the fact that the zero function is the only solution of (\ref{eq:neumann}), as well as (\ref{eq:hom}), with zero boundary condition. 

It only remains to verify that $\lW_2^\alpha$ is closed in $\lW$: if $(\mp_n,u_n)$ is a sequence in $\lW_2^\alpha$ that converges to some $(\mp,u)$ in $\lW$, then there exists $\tilde{u}_n \in H^1(\Omega; \C)$ such that
\begin{equation}\label{eq:konv}
\begin{array}{ll}
      \mp_n = -\alpha \nabla v_{\tilde{u}_n} \longrightarrow \mp \quad{\rm{ in }}\; L^2_{\operatorname{div}}(\Omega;\C^d) \,,\\
      u_n= w_{\tilde{u}_n} \longrightarrow u \quad{\rm{ in }}\; H^1(\Omega;\C) \,.\\
\end{array} 
\end{equation}
From the second convergence above, the continuity of the trace operator $\mT_0$, and boundary conditions in (\ref{eq:neumann}) and (\ref{eq:hom}) we obtain that $\mT_\mnu(\nabla v_{\tilde{u}_n}) = \mT_0 ({\tilde{u}_n}) = \mT_0(w_{\tilde{u}_n})$ converges to $\mT_0(u)$ in $H^{\frac{1}{2}}(\Gamma;\C)$ and thus also in $H^{-\frac{1}{2}}(\Gamma;\C)$. An a priori estimate for (\ref{eq:neumann}) gives that $v_{\tilde{u}_n}$ converges to $v_u$ in $H^1(\Omega;\C)$, which together with the first convergence in (\ref{eq:konv}) implies that $\mp=-\alpha\nabla v_u$, by uniqueness of the limit. Similarly, from an a priori estimate for (\ref{eq:hom}) we conclude that $w_{\tilde{u}_n}$ converges to $w_u$ in $H^1(\Omega;\C)$, and thus $w_u = u$ by uniqueness of the limit. Therefore, $(\mp,u) =(-\alpha\nabla v_u, w_u)$, and thus $\lW_2^\alpha$ is closed in $\lW$.

With this, we conclude our discussion on the multiplicity of $M$-operators associated with the chosen (homogeneous Dirichlet) boundary condition, along with the corresponding subspaces $\lW_2$ used in their construction. Although the examples presented do not cover all possible $M$-operators, they serve to demonstrate that the abstract construction provided in Theorem \ref{thm:ABcpde-thm08}(i) is applicable (see also Remark \ref{rem:cons-M}). In particular, the choice $\lW_2 = \ker T_1$ (see Theorem \ref{tm:V-implies-M}) is highlighted. 
Moreover, the approach aligns with previously established $M$-operators developed through a rather different, classical method.

\end{example}

\section*{Acknowledgements}
We warmly thank M.~Waurick for insightful and stimulating discussions on this topic. 
We are also grateful to the anonymous referees for their constructive comments and suggestions that helped improve this manuscript.

This work was supported by the Croatian Science Foundation under
the projects numbers HRZZ-IP-2022-10-5181 (HOMeoS) and HRZZ-IP-2022-10-7261 (ADESO), and by the Austrian Science Fund (FWF) through the ESPRIT Programme [Grant DOI: 10.55776/ESP1299024]. For open access purposes, the authors have applied a CC BY public copyright license to any author-accepted manuscript version arising from this submission.


\begin{thebibliography}{99} 











\bibitem{ABmn}
{N.~Antoni\'{c}, K.~Burazin}:
\newblock{On equivalent descriptions of boundary conditions for Friedrichs systems},
{\it Math.~Montisnigri} {\bf 22--23} (2009--10) 5--13.

\bibitem{ABcpde}
{N.~Antoni\'{c}, K.~Burazin}:
\newblock{Intrinsic boundary conditions for Friedrichs systems},
\newblock{\it Commun.~Partial Differ.~Equ.}~{\bf 35} (2010) 1690--1715.

\bibitem{ABjde} 
{N.~Antoni\'{c}, K.~Burazin}:
\newblock{Boundary operator from matrix field formulation of boundary conditions for
	Friedrichs systems},
{\it J.~Differ.~Equ.} {\bf 250} (2011) 3630--3651.

\bibitem{ABCE}
{N.~Antoni\'{c}, K.~Burazin, I.~Crnjac, M.~Erceg}:
\newblock{Complex Friedrichs systems and applications}, 
\newblock{\it J. Math. Phys.} {\bf 58} (2017) 101508.

\bibitem{ABVisrn} 
{ N.~Antoni\'{c}, K.~Burazin, M.~Vrdoljak}:
\newblock{Connecting classical and abstract theory of Friedri\-chs systems via trace operator},
{\it ISRN Math. Anal.}~{\bf 2011} (2011) 469795, 14 pp.,
doi: 10.5402/2011/469795

\bibitem{ABVjmaa} 
{ N.~Antoni\'{c}, K.~Burazin, M.~Vrdoljak}:
\newblock{Heat equation as a Friedrichs system},
{\it J.~Math. Anal.~Appl.}~{\bf  404} (2013) 537--553.

\bibitem{AEM-2017}
{N.~Antoni\'{c}, M.~Erceg, A.~Michelangeli}:
\newblock{Friedrichs systems in a Hilbert space framework: solvability and multiplicity},
\newblock{\it J.~Differ.~Equ.} {\bf 263} (2017) 8264--8294.

\bibitem{ACE23}
{W.~Arendt, I.~Chalendar, R.~Eymard}:
\newblock{Extensions of dissipative and symmetric operators},
\newblock{\it Semigroup Forum} {\bf 106}(2) (2023) 339--367.

\bibitem{Beh25}
{J.~Behrndt}:
\newblock{Boundary value problems for adjoint pairs of operators},
to appear in {\it J.~Operator Theory},
26 pp.

\bibitem{BE15}
{ J.~Behrndt, A.F.M. ter Elst}:
\newblock{Dirichlet-to-Neumann maps on bounded Lipschitz
domains},
{\it J.~Differ.~Equ.} {\bf 259} (2015) 5903--5926.

\bibitem{BH21}
{ M.~Berggren, L.~H\"agg}:
\newblock{Well-posed variational formulations of 
Friedrichs-type systems},
{\it J.~Differ.~Equ.} {\bf 292} (2021) 90--131.

\bibitem{Bo}
{J.~Bogn\'ar}:
{\it Indefinite inner product spaces},
Springer, 1974.

\bibitem{BDG}
{ T.~Bui-Thanh,  L.~Demkowicz, O.~Ghattas}:
\newblock{A unified discontinuous Petrov-Galerkin method and its analysis for
	Friedrichs' systems},
{\it SIAM J.~Numer.~Anal.}~{\bf 51} (2013) 1933--1958.


\bibitem{BEmjom}
{K.~Burazin, M.~Erceg}:
\newblock{Non-Stationary abstract Friedrichs systems},
\newblock{\it Mediterr.~J.~Math.}~{\bf 13} (2016) 3777--3796.

\bibitem{BEW23}
{K.~Burazin, M.~Erceg, M.~Waurick}:
\newblock{G-convergence of Friedrichs systems revisited},
\newblock{\tt arXiv:2307.01552} (2023) 18 pp.


\bibitem{BVcpaa}
{K.~Burazin, M.~Vrdoljak}:
\newblock{Homogenisation theory for Friedrichs systems},
\newblock{\it Commun.~Pure Appl.~Anal.}~{\bf 13} (2014) 1017--1044.

\bibitem{BEF}
{E.~Burman, A.~Ern, M.~A. Fernandez}:
\newblock{Explicit Runge-Kutta schemes and finite elements with symmetric
	stabilization for first-order linear PDE systems},
\newblock{\it SIAM J.~Numer.~Anal.}~{\bf 48} (2010) 2019--2042.


\bibitem{CM21}
N.~A.~Caruso, A.~Michelangeli:
\newblock{Krylov solvability of unbounded inverse linear problems},
\newblock{\it Integral Equations Operator Theory} {\bf 93} (2021), no.~1, Paper No.~1, 26 pp.


\bibitem{CHWY23}
F.~Chen, J.~Huang, C.~Wang, H.~Yang:
\newblock{Friedrichs learning: weak solutions of partial differential equations via deep learning},
\newblock{\it SIAM J.~Sci.~Comput.}~{\bf 45} (2023) A1271--A1299.



\bibitem{DL}
 R. Dautray, J.-L. Lions:
\newblock{ Mathematical Analysis and Numerical Methods for Science and Technology, Volume 2: Functional and Variational Methods},
\newblock {\it Springer-Verlag}, 1988.

\bibitem{Engel2}
{K.-J. Engel, R. Nagel}:
\newblock{A Short Course on Operator Semigroups},
\newblock{\it Springer}, 2006,
{\it ISBN: 978-0-387-31341-2},
{\it DOI: 10.1007/0-387-36619-9}.


\bibitem{EM19}
{ M.~Erceg, A.~Michelangeli}: 
\newblock{On contact interactions realised as Friedrichs systems}, 
{\it Complex Anal. Oper.~Theory}~{\bf 13} (2019) 703--736.

\bibitem{ES22}
{ M.~Erceg, S.~K.~Soni}:
\newblock{Classification of classical Friedrichs differential operators: One-dimensional scalar case},
\newblock{\em Commun.~Pure Appl.~Anal.}~{\bf 21}(10) (2022) 3499--3527.

\bibitem{ES25}
{ M.~Erceg and S.~K. Soni}:
\newblock{ The von Neumann extension theory for abstract Friedrichs operators}, 
\newblock{\it Z. Anal. Anwend.}~{\bf 44} (2025) 193--218.

\bibitem{EGsemel}
{A.~Ern, J.-L. Guermond}:
\newblock{Discontinuous Galerkin methods for Friedrichs' systems. I. General theory,}
\newblock{\it  SIAM J.~Numer.~Anal.}~{\bf 44} (2006) 753--778.

\bibitem{EGbis}
{ A.~Ern, J.-L. Guermond}:
\newblock{Discontinuous Galerkin methods for Friedrichs' systems. II. Second-order
	elliptic PDEs,}
{\it  SIAM J.~Numer.~Anal.}~{\bf 44} (2006) 2363--2388.

\bibitem{EGter}
{ A.~Ern, J.-L. Guermond}:
\newblock{Discontinuous Galerkin methods for Friedrichs' systems. III. Multifield
	theories with partial coercivity,}
{\it  SIAM J.~Numer.~Anal.}~{\bf 46} (2008) 776--804.

\bibitem{EGC}
{ A.~Ern, J.-L. Guermond, G.~Caplain}:
\newblock{An intrinsic criterion for the bijectivity of Hilbert operators related to
	Frie\-drichs' systems,}
\newblock{\em Commun.~Partial Differ.~Equ.}~{\bf 32} (2007) 317--341.


\bibitem{KOF}
{K.~O. Friedrichs}:
\newblock{Symmetric positive linear differential equations,}
\newblock{\it Commun.~Pure Appl.~Math.} {\bf 11} (1958) 333--418.

\bibitem{FL}
{K.~O. Friedrichs and P.~D.~Lax}:
\newblock{Boundary value problems for first order operators,}
\newblock{\it Commun.~Pure Appl.~Math.} {\bf 18} (1965) 355--388.




\bibitem{Grubb}
{G.~Grubb}:
\newblock{\it Distributions and operators,}
Springer, 2009.


\bibitem{HMSW}
{P.~Houston, J.~A. Mackenzie, E.~S\"uli, G.~Warnecke}:
\newblock{A posteriori error analysis for numerical approximation of Friedrichs systems,}
\newblock{\it Numer.~Math.} {\bf 82} (1999) 433--470.

\bibitem{MJensen}
{M.~Jensen}:
\newblock{\it Discontinuous Galerkin methods for Friedrichs systems with irregular solutions},
Ph.D.~thesis, University of Oxford, 2004, \hfil \\
{\tt http://sro.sussex.ac.uk/45497/1/thesisjensen.pdf}

\bibitem{JP17}
{P.~Jorgensen, E.~Pearse}:
\newblock{Symmetric pairs of unbounded operators in Hilbert space, and their applications in mathematical physics},
\newblock{\it Math.~Phys.~Anal.~Geom.}~{\bf 20}(2) (2017) Paper No.~14, 24 pp.

\bibitem{MDS}
{C.~Mifsud, B.~Despr\'es, N.~Seguin}:
\newblock{Dissipative formulation of initial boundary value problems for Friedrichs' systems},
\newblock{\it Commun.~Partial Differ.~Equ.}~{\bf 41} (2016) 51--78.

\bibitem{minty1962monotone}
G. J. Minty:
\newblock{Monotone (Nonlinear) Operators in Hilbert Space}, 
\newblock{\it Duke Mathematical Journal},~{\bf 29(3)}, 341-346, 1962.  

\bibitem{Pazy}
{A.~Pazy}:
\newblock{\it Semigroups of linear operators and applications to partial differential equations},
Springer-Verlag, 1983.

\bibitem{P}
{R. S.~Phillips}:
\newblock{ Dissipative operators and hyperbolic systems of partial differential equations},
\newblock{\it Trans. Amer. Math. Soc.}~{\bf 90} (1959) 193--254.

\bibitem{PS}
{R. S.~Phillips and L.~Sarason}:
\newblock{ Singular symmetric positive first order differential operators},
\newblock{\it J.~Math.~Mech.}~{\bf 15} (1966) 235--271.

\bibitem{PT24}
{R.~Picard, S.~Trostorff}:
\newblock{M-Accretive realisations of skew-symmetric operators},
\newblock{\it J. Operator Theory}
{\bf 92} (2024) 3--24.



\bibitem{Soni24}
{S.~K.~Soni}:
\newblock{ Classification of boundary conditions for Friedrichs systems}, 
\newblock{\it Ph.D.~thesis, University of Zagreb}, 2024, \hfil \\
{\tt https://urn.nsk.hr/urn:nbn:hr:217:417931}

\bibitem{Trostorff23}
{S.~Trostorff}:
\newblock{M-Accretivity via boundary systems},
arXiv:2311.09837, 20 pp.


\bibitem{WW20}
{M.~Waurick, S.-A.~Wegner}:
\newblock{Dissipative extensions and port-Hamiltonian operators on networks},
\newblock{\it J. Differ. Equ.} {\bf 269}(9) (2020) 6830--6874.


\bibitem{zeidler1985nonlinear} 
E. Zeidler:
\newblock{Nonlinear Functional Analysis and Its Applications II/B: Nonlinear Monotone Operators}, \newblock{\it Springer}, 1985.  


    

\end{thebibliography}
\end{document}